\providecommand{\R}{}
\providecommand{\N}{}
\renewcommand{\R}{\mathbb{R}}
\renewcommand{\N}{{\mathbb N}}
\newcommand{\E}[1]{{\mathbf E}\left[#1\right]}
\newcommand{\p}[1]{{\mathbf P}\left\{#1\right\}}
\newcommand{\brac}[1]{\left[#1\right]}
\newcommand{\Cexp}[2]{\mathbf{E}\brac{\left. #1 \; \right| \; #2}}
 \newcommand{\bag}{\begin{align}}
\newcommand{\bags}{\begin{align*}}
\newcommand{\eag}{\end{align*}}
\newcommand{\eags}{\end{align*}}
\newtheorem{thm}{Theorem}
\newtheorem{lem}[thm]{Lemma}
\newtheorem{prop}[thm]{Proposition}
\newtheorem{cor}[thm]{Corollary}
\newtheorem{dfn}[thm]{Definition}
\newcommand\cC{\mathrm C}
\newcommand\cG{\mathcal G}
\newcommand\cP{\mathcal P}
\newcommand\cR{{\mathcal R}}
\newcommand\cW{\mathcal W}
\newcommand{\rC}{\mathrm{C}}
\newcommand{\rN}{\mathrm{N}}
\newcommand{\rR}{\mathrm{R}} 
\newcommand{\rT}{\mathrm{T}} 
\newcommand{\rU}{\mathrm{U}} 
\newcommand{\ru}{\mathrm{u}} 
\newcommand{\rv}{\mathrm{v}}
\newcommand{\rZ}{\mathrm{Z}} 
\newcommand{\bX}{\mathbf{X}}
\newcommand{\bbT}{\mathbb{T}}
\newcommand{\pran}[1]{\left(#1\right)}
\providecommand{\ora}[1]{}
\renewcommand{\ora}[1]{\overrightarrow{#1}}
\newcommand{\eqdist}{\ensuremath{\stackrel{\mathrm{d}}{=}}}
\newcommand{\convas}{\ensuremath{\stackrel{\mathrm{a.s.}}{\rightarrow}}}
\newcommand{\aseq}{\ensuremath{\stackrel{\mathrm{a.s.}}{=}}}
\newcommand\urladdrx[1]{{\urladdr{\def~{{\tiny$\sim$}}#1}}}
\DeclareRobustCommand{\SkipTocEntry}[5]{}
\newcommand{\rt}{\mathrm{t}}
\newcommand{\br}{\mathop{\mathrm{br}}}
\newcommand{\leaf}{\mathop{\mathrm{leaf}}}
\renewcommand{\d}{\mathrm{d}}
\DeclareMathOperator{\dir}{Dir} 
\DeclareMathOperator{\ml}{ML} 
\newcommand{\seg}[2]{\left[ \! \left[ #1 , #2 \right] \! \right]}
\newcommand{\oseg}[2]{\left] \! \left] #1 , #2 \right[ \! \right[}
\newcommand{\eqd}{\stackrel{(d)}{=}}
\newcommand{\ind}{1\!\!1}
\begin{document}

\title{Inverting the cut-tree transform} 
\author{Louigi Addario-Berry}
\address{Louigi Addario-Berry, Department of Mathematics and Statistics, McGill University, 805 Sherbrooke Street West, 
		Montr\'eal, Qu\'ebec, H3A 2K6, Canada}
\email{louigi.addario@mcgill.ca}
\urladdrx{http://www.problab.ca/louigi/}
\author{Daphn\'e Dieuleveut}
\address{Daphn\'e Dieuleveut, Equipe de Probabilit\'es, Statistiques et Mod\'elisation, Universit\'e Paris-Sud, Batiment 430, 
91405 Orsay Cedex, France}
\email{daphne.dieuleveut@normalesup.org}
\author{Christina Goldschmidt}
\address{Christina Goldschmidt, Department of Statistics and Lady Margaret Hall, University of Oxford, 24-29 St Giles', Oxford OX1 3LB, UK}
\email{goldschm@stats.ox.ac.uk}
\urladdrx{http://www.stats.ox.ac.uk/~{}goldschm}


\subjclass[2010]{Primary: 60C05. Secondary: 05C05, 60G18, 60G52, 60E07} 

\begin{abstract} 
We consider fragmentations of an $\R$-tree $T$ driven by cuts arriving according to a Poisson process on $T \times [0,\infty)$, where the first co-ordinate specifies the location of the cut and the second the time at which it occurs.  The genealogy of such a fragmentation is encoded by the so-called \emph{cut-tree}, which was introduced by Bertoin and Miermont~\cite{BerMi} for a fragmentation of the Brownian continuum random tree.  The cut-tree was generalised by Dieuleveut~\cite{Dieuleveut} to a fragmentation of the $\alpha$-stable trees, $\alpha \in (1,2)$, and by Broutin and Wang~\cite{BroutinWangptrees} to the inhomogeneous continuum random trees of Aldous and Pitman~\cite{AldousPitmanICRT}.  In the first two cases, the projections of the forest-valued fragmentation processes onto the sequence of masses of their constituent subtrees yields an important family of examples of Bertoin's self-similar fragmentations~\cite{Ber_02}; in the first and third cases the time-reversal of the fragmentation gives an additive coalescent.  Remarkably, in all of these cases, the law of the cut-tree is the same as that of the original $\R$-tree.  

In this paper, we develop a clean general framework for the study of cut-trees of $\R$-trees. We then focus particularly on the problem of \emph{reconstruction}: how to recover the original $\R$-tree from its cut-tree.  This has been studied in the setting of the Brownian CRT by Broutin and Wang~\cite{BroutinWangReverse}, where they prove that it is possible to reconstruct the original tree in distribution. We describe an enrichment of the cut-tree transformation, which endows the cut tree with information we call a \emph{consistent collection of routings}. We show this procedure is well-defined under minimal conditions on the $\R$-trees. We then show that, for the case of the Brownian CRT and the $\alpha$-stable trees with $\alpha \in (1,2)$, the original tree and the Poisson process of cuts thereon can both be almost surely reconstructed from the enriched cut-trees.  For the latter results, our methods make essential use of the self-similarity and re-rooting invariance of these trees.
\end{abstract}

\maketitle


\section{Introduction} 

\subsection{Cutting down trees}

Consider a combinatorial tree $T_n$ with vertices labelled by $1,2, \ldots, n$. A natural cutting operation on $T_n$ consists of picking an edge $\{i,j\}$ uniformly at random and removing it, thus splitting the tree into two subtrees.  Iterating on each of these subtrees, we obtain a discrete fragmentation process on the tree, which continues until the state has been reduced to a forest of isolated vertices. 

A continuum analogue of this process has played an important role in the theory of coalescence and fragmentation.  Let $T$ be a Brownian continuum random tree and consider cuts arriving as a Poisson point process $\cP$ on $T \times [0,\infty)$ of intensity $\lambda \otimes \mathrm{d} t$, where $\lambda$ is the length measure on the skeleton of the tree.  Careful definitions of these objects will be given below; for the moment, we simply note that $\lambda$ is an infinite, but $\sigma$-finite measure, and that there is also a natural \emph{probability} measure $\mu$ on $T$ which allows us to assign masses to its subtrees.  This Poisson cutting of $T$ was first introduced and studied by Aldous and Pitman~\cite{AldPi}. For $s \ge 0$, let $F(s) = (F_1(s), F_2(s), \ldots)$ be the sequence of $\mu$-masses of the connected components of $T \setminus \{p:(p,t) \in \cP, t \le s\}$, listed in decreasing order.  Then $(F(s), s \ge 0)$ is an example of a self-similar fragmentation process, in the terminology of Bertoin~\cite{Ber_02}.  Moreover, a time-reversal of this fragmentation process gives a construction of the standard additive coalescent (see \cite{AldPi} for more details).  

The uniform cutting operation on trees described in the first paragraph was first considered in the mathematical literature in the early 1970's by Meir and Moon~\cite{MeirMoon1, MeirMoon2}.  They applied it repeatedly to the component containing a particular vertex (labelled $1$, say) and investigated the number of cuts required to isolate that vertex.  In \cite{MeirMoon1} and \cite{MeirMoon2}, Meir and Moon focussed on cutting down two particular classes of random trees: uniform random trees, and random recursive trees.  In both cases, these models possess a useful \emph{self-similarity property}: the tree containing the vertex labelled 1 after the first cut is, conditioned on its size, again a tree chosen uniformly from the class in question. 

This work spawned a line of research focussing primarily on the number of cuts, $N_n$, required to isolate the root in various models of random trees; see, for example, \cite{FillKapurPanholzer,Panholzer,DrIkMoeRoe,IksanovMoehle,Janson06,Holmgren}.  Janson~\cite{Janson06} considered the case where $T_n$ is a Galton--Watson tree with critical offspring distribution of finite variance $\sigma^2$, conditioned to have $n$ vertices.  (This includes the case where $T_n$ is a uniform random tree, since this is equivalent, up to a random labelling, to taking the offspring distribution to be Poisson(1).)  It is well known that the scaling limit of $T_n$ in this case is the Brownian continuum random tree $T$ \cite{CRT1,CRT2,CRT3}.  Janson made the striking observation that $N_n/(\sigma \sqrt{n})$ converges in law to a Rayleigh distribution.  The same limit holds for the rescaled distance between two uniformly chosen vertices in $T_n$, and is the law of the distance between two uniformly chosen points in the Brownian continuum random tree.  It was later shown~\cite{BerFires,ABBH,BerMi} that this common limit can be understood by using the cuts to couple $T_n$ with a new tree $T'_n$ in such a way that the number of cuts needed to isolate the vertex labelled $1$ in $T_n$ is the same as the distance between two uniformly-chosen vertices in $T'_n$, and where $T'_n$ also converges to the Brownian continuum random tree when suitably rescaled.  If $T_n$ is a uniform random tree, this can, in fact, be done in such a way that $T_n$ and $T'_n$ have exactly the same distribution for each $n$, using a construction called the Markov chainsaw \cite{ABBH}. The Markov chainsaw takes the sequence of subtrees which are severed from the subtree containing the root, and glues them along a path; one obtains a tree $T_n'$ which has the same distribution as $T_n$ along with two marked points (the extremities of the new path) which are uniform random vertices of $T_n'$. An analogous construction can be performed in the continuum.

\subsection{Fragmentation and cut-trees} \label{subsec:fragncutrees}

In this paper, we will focus on a construction which tracks the whole fragmentation, not just the cuts which affect the component of the root. Consider a discrete tree $T_n$ repeatedly cut at uniformly chosen edges.  The \emph{cut-tree} $C_n$ of $T_n$ represents the genealogy of this fragmentation process.  In this setting, $C_n$ is a binary tree with $n-1$ internal vertices and $n$ leaves, where the leaves correspond to the vertices of $T_n$ and the internal vertices to the non-singleton \emph{blocks} (that is, the collections of labels of the subtrees) appearing at some stage of the fragmentation.  The tree $C_n$ is rooted at a vertex corresponding to $[n]: = \{1,2,\ldots,n\}$, and the two children of the root are the two blocks into which the first cut splits $[n]$. More generally, for a non-singleton block $B \subset [n]$, the two children of $B$ are the two blocks into which the next cut to arrive splits $B$.

\begin{figure}
\centering
\begin{subfigure}[b]{0.3\textwidth}
		\includegraphics[width=\textwidth]{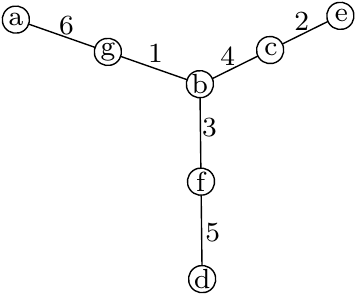}
        \end{subfigure}%
        \quad
\begin{subfigure}[b]{0.3\textwidth}
	\includegraphics[width=\textwidth,page=2]{discretecut.pdf}
        \end{subfigure}%
        \quad
        \begin{subfigure}[b]{0.3\textwidth}

	\includegraphics[width=\textwidth,page=3]{discretecut.pdf}
        \end{subfigure}%
	\caption{Left: A discrete tree $T$ with vertex labels from $\{a,b,c,d,e,f,g\}$. Edges are marked with the time at which they are cut. Center: The resulting cut tree $C$. Each internal vertex corresponds to a cut, and is labelled by the block of the fragmentation process which is split by that cut. Right: Cuts correspond to edges in $T$; here each internal vertex of $C$ is labelled by the pair of endpoints of the corresponding edge.} 
		\label{fig:cut_disc}
\end{figure}

The cut-tree was introduced by Bertoin in \cite{BerFires}, where he considered the case in which $T_n$ is a uniform random tree with $n$ vertices (although the name ``cut-tree'' was first coined subsequently in \cite{BerMi}).  The idea of using a tree to track the genealogy of a discrete fragmentation process also notably appears earlier in \cite{HaMi}.  In \cite{BerMi}, Bertoin and Miermont took $T_n$ to be a Galton--Watson tree with critical offspring distribution of finite variance $\sigma^2$, conditioned to have $n$ vertices.   Bertoin and Miermont proved the following remarkable result. View $T_n$ (resp.\ $C_n$) as a measured metric space by taking $\sigma n^{-1/2}$ (resp.\ $\sigma^{-1}n^{-1/2}$) times the graph distance as the metric, and in both cases endowing the vertices with the uniform probability measure. Then the pair $(T_n,C_n)$ converges in distribution as $n \to \infty$ (in the Gromov--Prokhorov sense) to a pair of dependent Brownian continuum random trees $(T,C)$.  The second CRT is obtained from the first by a continuum analogue of the discrete cut-tree construction discussed above;  we will describe this in detail (and in greater generality) below, once we have introduced the necessary notation.

Cut-trees have been considered for other models of random discrete trees, notably for random recursive trees in \cite{BertoinCutTree, BaurBertoin}.  In that setting, the tree $T_n$ itself (when endowed with the graph distance) does not possess an interesting scaling limit, but the corresponding cut-tree $C_n$, thought of as a metric space using the graph distance divided by $n/\log n$, and endowed with the uniform probability measure on its leaves, converges in the Gromov--Hausdorff--Prokhorov sense to the unit interval endowed with the Lebesgue measure \cite{BertoinCutTree}.  (We observe that there are minor variations in the way that discrete cut-trees are defined in the existing literature. We will gloss over these differences since our primary interest is in the continuous case, where there is no ambiguity of definition.)

The fragmentation of the Brownian continuum random tree via Poisson cutting was generalised to a fragmentation process of the $\alpha$-stable trees $T_{\alpha}$, $\alpha \in (1,2)$, by Miermont in \cite{Mi_05}.  The stable trees are the scaling limits of Galton--Watson trees $T_n$ with critical offspring distribution in the domain of attraction of an $\alpha$-stable law, conditioned to have total progeny $n$.  The heavy-tailed nature of the offspring distribution is reflected in the limit by the fact that the branchpoints (or \emph{nodes}) of the tree all have infinite degree almost surely.  Despite this, the nodes can be given a notion of size (which is made precise using a local time).  In order to obtain a self-similar fragmentation, it is necessary now for the cuts to occur at the nodes of $T_{\alpha}$ rather than along the skeleton.  This is achieved by using a Poisson process whose intensity at a particular node is proportional to the ``size'' of that node.  The cut-tree $C_{\alpha}$ corresponding to this fragmentation was introduced and studied in \cite{Dieuleveut}.  Again, it is the case that if $T_n$ is a conditioned critical Galton--Watson tree with offspring distribution in the domain of attraction of an $\alpha$-stable law, and if $C_n$ is the corresponding discrete cut-tree (suitably adapted to take into account of cutting at vertices rather than edges), then $(T_n, C_n)$ suitably rescaled converges (in the Gromov--Prokhorov sense) to the pair  $(T_{\alpha}, C_{\alpha})$, where $T_{\alpha}$ and $C_{\alpha}$ are (dependent) $\alpha$-stable trees. In the sequel, we will often think of and refer to the Brownian CRT as the $2$-stable tree.

Broutin and Wang~\cite{BroutinWangptrees} generalised the Brownian cut-tree in a different direction, to the inhomogeneous continuum random trees (ICRT's) of Aldous and Pitman~\cite{AldousPitmanICRT}. (These are the scaling limits of the so-called $p$-trees~\cite{CamarriPitman,AldousPitmanRSA}.)  Cutting an ICRT according to the points of a Poisson process, whose intensity measure is now a linear combination of the length measure on the skeleton and a measure on the nodes, again yields a sort of fragmentation process (although no longer, in general, a self-similar or even Markovian one), whose time-reversal is a (non-standard) additive coalescent \cite{AldousPitmanICRT}. Broutin and Wang established that (a particular version of) the cut-tree of a $p$-tree is again a $p$-tree. They also showed that the cut-tree of a (discrete) $p$-tree converges to the continuum cut-tree of the scaling limit ICRT.  Moreover, the cut-tree of an ICRT again has the same law as the original tree.

Abraham and Delmas~\cite{AbrD_FARPLT}, working in the continuum, proved an analogue of the Markov chainsaw result for the L\'evy trees of Duquesne and Le Gall~\cite{DuqLG_02}, which form the general family of scaling limits of conditioned Galton--Watson trees. In particular, they showed that there is a measure $\lambda$ such that if one cuts the tree in a Poisson manner with intensity $\lambda \otimes \mathrm{d} t$ and glues the trees which get separated from the component containing the root along a line-segment then, working under the excursion measure, one again obtains a L\'evy tree with the same ``law'' as the original tree. We understand that the cut-tree of a L\'evy tree is the subject of work in progress by Broutin and Wang.

\subsection{A general framework, and reconstruction} \label{subsec:general}

In this paper, we work directly in the continuum, and establish a general framework for the study of the cut-tree of an $\R$-tree $T$, where the cutting occurs according to a Poisson random measure on $T \times [0,\infty)$ of intensity $\lambda \otimes \mathrm{d} t$, and where $\lambda$ is any measure on $T$ satisfying certain natural conditions.  This encompasses all of the examples which have previously been studied.  In this general setting, we establish conditions under which it is possible to make sense of a unique cut-tree $C$. 
Under a compactness assumption, we are also able to define the push-forwards of probability measures on $T$ to the cut-tree $C$, by studying the push-forwards of empirical measures in $T$.

Our main result concerns the problem of \emph{reconstruction}, that is, recovering the original tree $T$ from its cut-tree $C$. In the Brownian CRT setting, the paper \cite{ABBH} describes a partial cut-tree, in which only cuts of the component containing a root vertex are considered, and describes how to reconstruct $T$ in distribution from this partial cut-tree. This result is generalized by Broutin and Wang in \cite{BroutinWangptrees} to a partial reconstruction result for cut-trees of ICRTs, and in \cite{BroutinWangReverse} to complete reconstruction in the case of the Brownian CRT. More precisely, in \cite{BroutinWangReverse} they describe what they call a ``shuffling'' operation on trees. Writing $s(T)$ for the shuffling of the tree $T$, they show that the pair $(s(T),T)$ has the same law as the pair $(T,C)$, where $C$ is the cut-tree of $T$. Thus, the shuffling operation is a \emph{distributional} inverse of the cut-tree operation, for Brownian CRTs. However, the question of whether the original tree can be recovered almost surely was left open. One of the contributions of this paper is to establish that, indeed, $\alpha$-stable trees can almost surely be reconstructed from their cut-trees, for all $\alpha \in (1,2]$. 

To state our theorems formally requires some technical set-up, which we defer to the next section.  It is, however, instructive to consider the discrete reconstruction problem, since what we do in the continuum will be analogous and the discrete version is rather easier to visualise.  We will focus on the situation where we cut at edges, so that the cut-tree is defined as at the start of Section~\ref{subsec:fragncutrees}.  
Then the extra information which is required in order to reconstruct $T_n$ from $C_n$ is precisely the set of labels of the edges in the original tree.  

Earlier we thought of an internal vertex of $C_n$ as representing a non-singleton block $B$ of the fragmentation, where $B$ contains all of the vertices labelling the leaves in the subtree above that internal vertex.  We may equally think of such an internal vertex as corresponding to the edge $\{i,j\}$ which is cut and causes $B$ to fragment, and from this perspective it is natural to mark this vertex with the pair $\{i,j\}$. In the discrete setting, such markings provide enough information to recover $T_n$; indeed, they fully specify the edge set of $T_n$, so reconstruction is trivial! However, this does not generalise to the $\R$-tree setting. A natural question, and one which we partly answer in this paper, is whether an analogue of ``labelling by cut-edges'' can be defined for the cut-trees of $\R$-trees; and, if so, whether such a labelling contains enough information to allow reconstruction, as in the discrete setting. In the next paragraph, we sketch the reconstruction procedure whose continuum analogue we develop in the sequel.

Suppose that we wish to recover the path between two vertices $i$ and $j$.  Then we may do so as follows.  The subtrees containing $i$ and $j$ were separated by a cut which is represented in $C_n$ by the most recent common ancestor (MRCA) of $i$ and $j$; call this node $i \wedge j$.  The internal node $i \wedge j$ is marked with two labels, $k$ and $\ell$, where the vertex $k$ lies in the subtree above $i \wedge j$ containing $i$ and the vertex $\ell$ lies in the subtree above $i \wedge j$ containing $j$.  We call the pair $(k,\ell)$ a \emph{signpost} for $i$ and $j$.  So we now know that $\{k,\ell\}$ is an edge of $T_n$ lying on the path between $i$ and $j$.  In order to recover the rest of the path, we need to determine the path between $i$ and $k$ and the path between $\ell$ and $j$.  We do this by repeating the same procedure in the subtree above $i \wedge j$ containing $i$ and $k$ and in the subtree above $i \wedge j$ containing $j$ and $\ell$.  So, for example, we find the MRCA of $i$ and $k$, $i \wedge k$, and consider its marks, which tell us about the edge which was cut resulting in the separation of $i$ and $k$ into different subtrees.  We continue this recursively in each subtree, stopping in a particular subtree only when both of the marks on the MRCA are those of vertices already observed on the path.  We will call a \emph{routing} the collection of signposts used in this process.  Although somewhat cumbersome in the discrete setting, this procedure turns out to generalise nicely to the continuum, whereas the notion of edges does not.

\subsection{Stable trees as fixed points}
This work may be viewed in part as a contribution to the literature on transformations with stable trees as a fixed point. The articles \cite{epw,ew} were perhaps the first to explicitly take this perspective; motivated by problems from phylogenetics and algorithmic computational biology, they introduce \emph{cutting and regrafting} operations on CRTs, and show that these operations have the Brownian CRT as fixed points. The main results of \cite{ABBH, BroutinWangptrees, Dieuleveut} state that the Brownian, inhomogeneous, and $\alpha$-stable trees, respectively, are all fixed points of suitable cut-tree operations. We also mention the quite recent work of Albenque and the third author \cite{AG}, which describes a CRT transformation for which the Brownian CRT is the \emph{unique} fixed point, and furthermore shows that the fixed point is attractive. It would be interesting to establish analogous results for cut-tree operations.

\subsection{Outline}

We conclude this rather lengthy introduction with an outline of the remainder of the paper. Section~\ref{sec:notation} formally introduces some of the basic objects and random variables of study, including $\R$-trees and their marked and measured versions, and the $\alpha$-stable trees. Section~\ref{sec:cut_general} presents our general construction of cut-trees of measured $\R$-trees, and of the routing information which we use for reconstruction. 

In Section~\ref{sec:stable_trees} we specialize our attention to stable trees, and show that almost sure reconstruction is possible in this case. In Section~\ref{sec:dist_ident} we establish a fixed-point identity for size-biased Mittag-Leffler random variables. We use this identity in Section~\ref{sec:reconstruct}, together with an endogeny result and a somewhat subtle martingale argument, to show that it is possible to almost surely reconstruct distances between two random points. We extend the reconstruction from two points to all points in Section~\ref{sec:recover_all_dist}. 

Finally, Section~\ref{sec:conc} contains some more speculative remarks, and presents several questions and avenues for future research.

\section{Trees and metric spaces}\label{sec:notation}

Fix a measurable space $(S,\mathcal{S})$ and a finite measure $\mu$ on $(S,\mathcal{S})$.  
we write $X \sim \mu$ if $X$ is an $S$-valued random variable with law $\mu/\mu(S)$. 
For $R \in \mathcal{S}$, we write $\mu_R = \mu(R)^{-1} \mu\vert_R$ for the restriction of $\mu$ to $R$, rescaled to form a probability measure.  

\subsection{$\R$-trees}\label{sec:rtrees}

We begin by recalling some standard definitions. 
For a metric space $(M,d)$ and $S \subset M$, we write $(S,d)$ as shorthand for the metric space $(S,d|_{S \times S})$.  
\begin{dfn}
A metric space $(T,d)$ is an \emph{$\R$-tree} if, for every $u,v \in T$:
\begin{itemize}
\item there exists a unique isometry $f_{u,v}$ from $[0,d(u,v)]$ into $T$ such that $f_{u,v}(0) = u$ and $f_{u,v}(d(u,v)) = v$;
\item for any continuous injective map $f:[0,1] \to T$, such that $f(0)=u$ and $f(1)=v$, we have
\begin{align*}
f([0,1]) = f_{u,v} ([0,d(u,v)]) := \seg{u}{v}.
\end{align*}
\end{itemize}

A rooted $\R$-tree is an $\R$-tree $(T,d,\rho)$ with a distinguished point $\rho$ called the \emph{root}. 
\end{dfn}
Note that we do not require $\R$-trees to be compact.

Let $\rT=(T,d)$ be an $\R$-tree. The \emph{degree} $\mathrm{deg}(x)$ of $x \in T$ is the number of connected components of $T \setminus \{x\}$.  An element $x \in T$ is a \emph{leaf} if it has degree 1; we write $\leaf(\rT)$ for the set of leaves of $T$.  An element $x \in T$ is a \emph{branchpoint} if it has degree at least 3; we write $\br(\rT)$ for the set of branchpoints of $T$. For $x,y \in T$, write $\oseg{x}{y}$ for $\seg{x}{y} \setminus \{x,y\}$.  
The {\em skeleton} of $\rT$, denoted $\mathrm{skel}(\rT)$, is the set $\bigcup_{x,y \in T} (\seg{x}{y} \setminus \{x,y\})$ of vertices with degree at least two. 
We observe that the metric $d$ gives rise to a \emph{length measure} $\lambda$, supported by $\mathrm{skel}(\rT)$, which is the unique $\sigma$-finite measure such that $\lambda(\seg{u}{v}) = d(u,v)$.

A common way to encode a rooted $\R$-tree is via an excursion, that is, a continuous function $h: [0,1] \to \R_+$ such that $h(0) = h(1) = 0$ and $h(x) > 0$ for $x \in (0,1)$.  For $x,y \in [0,1]$, let
\[
d(x,y) = h(x) + h(y) - 2 \min_{x \wedge y \le z \le x \vee y} h(z),
\]
and define an equivalence relation $\sim$ by declaring $x \sim y$ if $d(x,y) = 0$.  Let $T = [0,1]/\sim$.  Then it can be checked that $(T, d)$ is an $\R$-tree which may be naturally rooted at the equivalence class $\rho$ of 0, and endowed with the measure $\mu$ which is the push-forward of the Lebesgue measure on $[0,1]$ onto $T$.

A rooted $\R$-tree $\rT=(T,d,\rho)$ comes with a genealogical order $\prec$ such that $x \prec y$ if and only if $x \in \seg{\rho}{y}$ and $x \neq y$; we say that $x$ is an \emph{ancestor} of $y$.  The \emph{most recent common ancestor} $x\wedge y$ of $x,y \in T$ is the point of $\{z: z \prec x, z \prec y\}$ which maximises $d(\rho,z)$.
For $z \in T$, write $T_z = \{x \in T: x \wedge z=z\}$, and $\rT_z=(T_z,d,z)$ for the subtree above $z$. Next, for $y \in T_z$ write $T_z^y=\{x \in T: z \prec x \wedge y\}\cup \{z\}$ and $\rT_z^y = (T_z^y,d,z)$ 
for the subtree above $z$ containing $y$. Note that $T_z^y \subset T_z$. 

A \emph{measured} $\R$-tree is a triple $(T,d,\mu)$, where $(T,d)$ is an $\R$-tree and $\mu$ is a Borel probability measure on $T$. A \emph{pointed} $\R$-tree is a triple $(T,d,s)$, where $s$ is a finite or infinite sequence of elements of $T$. We combine adjectives in the natural way; thus, for example, a rooted measured pointed $\R$-tree is a quintuple $(T,d,\rho,\mu,s)$.

\subsection{Random $\R$-trees}\label{sec:rrt}
The topological prerequisites for the study of random $\R$-trees have been addressed by several authors \cite{adh,bbi,epw,ew,fukaya,GPW_08,gromov07,miermont}. Many of the aforementioned papers study random metric spaces more generally, but the theory specializes nicely to the setting of $\R$-trees, which is all we require in the current work. 
In this section we summarize the definitions and results which we require. 

We hereafter restrict our attention to complete, locally compact $\R$-trees. Fix rooted measured pointed $\R$-trees $\rT=(T,d,\rho,\mu,s)$ and $\rT' = (T',d',\rho',\mu',s')$. We say $\rT$ and $\rT'$ are \emph{isometric} if 
there exists a metric space isometry $\phi:T \to T'$ which sends $\rho$ to $\rho'$, $\mu$ to $\mu'$, and $s$ to $s'$. More precisely, $s$ and $s'$ must have the same cardinality, and $\phi$ must satisfy the following:
\begin{itemize}
\item $\phi(\rho)=\rho'$;
\item $\mu'$ is the pushforward of $\mu$ under $\phi$;
\item writing $s_k$ and $s_k'$ for the $k$'th elements of $s$ and $s'$, respectively, then $\phi(s_k)=s_k'$ for all $k$.
\end{itemize}
We define isometry for less adjective-heavy $\R$-trees by relaxing the constraints on $\phi$ correspondingly. For example, measured $\R$-trees $(T,d,\mu)$ and $(T',d',\mu')$ are isometric if there exists a metric space isometry $\phi:T \to T'$ which sends $\mu$ to $\mu'$.

Let $\bbT$ be the set of isometry equivalence classes of (complete, locally compact) rooted measured $\R$-trees. Endowing $\bbT$ with the Gromov--Hausdorff--Prokhorov (GHP) distance turns $\bbT$ into a Polish space \cite{adh}, which allows us to consider $\bbT$-valued random variables.

The GHP convergence theory for rooted measured pointed $\R$-trees with a finite number of marked points 
is described in Section 6 of \cite{miermont}. In order to apply this theory in the current setting, two comments are in order. First, the theory is described for compact. rather than locally compact spaces. However, the  development proceeds identically for locally compact spaces, so we omit the details. Second, in the present paper, we will in fact have a countably infinite number of marks. We briefly comment on the additional, rather standard, topological considerations. For each $1 \le n \le \infty$, fix $\rT_n=(T_n,d_n,\mu_n) \in \bbT$ and let $s_n=(s_{n,i},i \ge 1)$ be a sequence of elements of $T_n$. We say the sequence of marked spaces $(T_n,d_n,\mu_n,s_n)$ converges to $(T_\infty,d_\infty,\mu_\infty,s_\infty)$ if, for each $m \in \N$, $(T_n,d_n,\mu_n,(s_{n,i},i \le m))$ converges to $(T_{\infty},d_{\infty},\mu_{\infty},(s_{\infty,i},i \le m))$ in the sense described in \cite{miermont}.

We conclude by noting a sufficient condition for two $\R$-trees to have the same law. Let $\rT = (T,d,\rho,\mu)$ and $\rT' = (T',d',\rho',\mu')$ be $\bbT$-valued random variables. Conditional on $\rT$, let $(U_i)_{i \in \N}$ be a sequence of i.i.d.\ points of $T$ with common law $\mu$ and, conditional on $\rT'$, let $(U'_i)_{i \in \N}$ be a sequence of i.i.d.\ points of $T'$ with common law $\mu'$.  (See Section 6.5 of \cite{miermont} for a treatment of the measurability issues involved in randomly sampling from a random metric space.) Letting $U_0 = \rho$ and $U'_0 = \rho'$, if
\[
(d(U_i, U_j))_{i, j \ge 0} \eqd (d'(U'_i, U'_j))_{i,j \ge 0},
\]
then $\rT$ and $\rT'$ are identically distributed (see \cite{gromov07}, Theorem $3 \frac{1}{2}.5$).

\subsection{The stable trees}\label{sec:stable_def}

A \emph{stable tree} of index $\alpha \in (1,2]$ is a random measured $\R$-tree $\rT=(T,d_T,\mu)$ derived from a suitably normalized excursion of length $1$ of a spectrally positive $\alpha$-stable L\'evy process. Equivalently, it is the scaling limit of large conditioned Galton-Watson trees whose offspring distribution is critical and lies in the domain of attraction of an $\alpha$-stable law. $\rT$ is naturally equipped with a root, $\rho$, which arises as the equivalence class of 0, although we will often not need it and so omit it from the notation.  The following theorem says that we can always regenerate it at will, as it is a uniform pick from the mass measure $\mu$.

\begin{thm} \label{thm:re-rootinvariance}
Let $(T,d_T, \rho, \mu)$ be a rooted stable tree of index $\alpha \in (1,2]$.  Let $r$ be sampled from $T$ according to $\mu$.  Then $(T,d_T,r, \mu)$ has the same (unconditional) distribution as $(T,d_T,\rho,\mu)$.
\end{thm}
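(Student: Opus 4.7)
The plan is to invoke Gromov's reconstruction principle quoted at the end of Section~\ref{sec:rrt}: to show two $\bbT$-valued random variables agree in law, it suffices to show that the law of the distance matrix $(d(U_i, U_j))_{i,j \ge 0}$, with $U_0$ the root and $(U_i)_{i \ge 1}$ an i.i.d.\ $\mu$-sample, is the same for both. Thus the theorem reduces to verifying that, when we additionally sample $r \sim \mu$ independently, the joint law of $(d_T(\rho, U_i), d_T(U_i, U_j))_{i, j \ge 1}$ coincides with that of $(d_T(r, U_i), d_T(U_i, U_j))_{i, j \ge 1}$.

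First I would recall the coding of $\rT$ by a continuous excursion $H: [0,1] \to \R_+$ (the normalized $\alpha$-stable height process), together with the canonical projection $p: [0,1] \to T$ satisfying $p(0) = \rho$ and $\mu = p_*(\mathrm{Leb})$. Distances are then given by
\begin{equation*}
d_T(p(s), p(t)) = H(s) + H(t) - 2 \inf_{u \in [s \wedge t, s \vee t]} H(u),
\end{equation*}
so an i.i.d.\ $\mu$-sample corresponds to projecting i.i.d.\ uniform random variables $(V_i)_{i \ge 1}$ via $p$, and sampling $r \sim \mu$ corresponds to setting $r = p(V_0)$ for an independent uniform $V_0$.

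The core step is to show that the normalized $\alpha$-stable excursion $H$ is invariant in law under re-rooting at a uniform random time. Concretely, let $H^{(u)}$ denote the excursion coding the $\R$-tree $(T,d_T)$ re-rooted at $p(u)$; I would establish that for $V_0$ uniform on $[0,1]$ independent of $H$ one has $H^{(V_0)} \eqd H$. Granted this, the re-rooted tree is coded by $H^{(V_0)}$, and under the corresponding projection the point $p(V_i)$ is the image of the shifted time $V_i - V_0 \bmod 1$. Since the family $(V_i - V_0 \bmod 1)_{i \ge 1}$ is still i.i.d.\ uniform and independent of $H^{(V_0)}$, the distance matrix computed from the re-rooted picture has the same law as that computed from the original, and the theorem follows.

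The main obstacle, and the substance of the argument, is the uniform-shift invariance of the normalized excursion itself. For $\alpha = 2$ this is the classical re-rooting invariance of the normalized Brownian excursion, due to Aldous (see \cite{CRT3}). For $\alpha \in (1,2)$ it is established by Duquesne and Le Gall in \cite{DuqLG_02}, using the description of the normalized stable excursion as (a rescaling of) the excursion of a spectrally positive $\alpha$-stable L\'evy process straddling an independent uniform time, combined with the cyclic-shift invariance of the corresponding L\'evy bridge. As this invariance is already in the literature, the proof here would essentially amount to citing it and carrying out the reduction described above.
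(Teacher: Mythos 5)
Your proof is correct, and it is essentially the paper's proof: the paper simply cites Aldous (\cite{CRT1}, for $\alpha=2$) and Proposition 4.8 of Duquesne--Le Gall (\cite{DuqLG_05}, for $\alpha\in(1,2)$), which give exactly the re-rooting invariance you establish. The only difference is cosmetic: you spell out the reduction to the distance matrix and excursion coding and cite the cyclic-shift invariance of the normalized stable excursion from \cite{DuqLG_02}, whereas the paper cites the tree-level re-rooting statement from \cite{DuqLG_05} directly; these are equivalent formulations of the same fact.
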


This follows from Aldous~\cite{CRT1} for $\alpha=2$ and Proposition 4.8 of Duquesne and Le Gall~\cite{DuqLG_05} for $\alpha \in (1,2)$. 

The stable tree of index $\alpha=2$ corresponds to the Brownian continuum random tree encoded by $(\sqrt{2} \mathbbm{e}(t))_{0 \leq t \leq 1}$, where $\mathbbm{e}$ denotes a normalized Brownian excursion. A significant difference between the Brownian CRT and the stable trees of index $\alpha \in (1,2)$ is the fact that the Brownian CRT is almost surely binary (i.e. $\mathrm{deg}(b)=3$ almost surely for all branchpoints $b$), whereas a stable tree of index $\alpha \in (1,2)$ almost surely has only branchpoints of infinite degree.  In the latter case, the ``size'' of a branchpoint $b \in \br(\rT)$ can be described by the quantity
\begin{gather*}
A (b) = \lim_{\epsilon \rightarrow 0^+} \frac{1}{\epsilon} \mu \{ v \in T: b \in \seg{\rho}{v}, d_{T}(b,v) < \epsilon \},
\end{gather*}
whose existence was proved in \cite{Mi_05} (see also \cite{DuqLG_05}). It is useful to define a measure $\Lambda$ on $T$, as follows. If $\alpha=2$ then $\Lambda$ is twice the length measure on $\mathrm{skel}(\rT)$, and if $\alpha \in (1,2)$ then $\Lambda = \sum_{b \in \br(T)} A(b)\cdot \delta_b$. 

The next theorem concerns the self-similarity of the stable trees, and will play an important role in Section~\ref{sec:stable_trees}. Let $x,y,z$ be independent points of $T$ with common law $\mu$, and let $b$ be the common branchpoint of $x,y,z$ (i.e. the unique element of $\seg{x}{y} \cap \seg{x}{z} \cap \seg{y}{z}$). Recall that $T^x_b$ is the subtree of $T$ consisting of all points $w$ with $b \not\in \oseg{x}{w}$, and write $\rT^x$ and $\rT^y$ for the measured $\R$-trees induced by $(T^x_b, \mu(T_b^x)^{-1+1/\alpha} \cdot d, b, \mu_{T_b^x})$ and $(T^y_b, \mu(T_b^y)^{-1+1/\alpha}\cdot d, b, \mu_{T_b^y})$ respectively.

\begin{thm}  \label{T:cut_in_3}
\begin{enumerate}
\item The trees $\rT^x$ and $\rT^y$ are independent $\alpha$-stable trees, independent of the vector $(\mu(T_b^x), \mu(T_b^y))$.
\item Conditionally on $\rT^x$ (resp. $\rT^y$), the points $x$ and $b$ (resp.\ $y$ and $b$) are independent points of $\rT^x$ (resp. $\rT^y$) sampled according to its rescaled mass measure. 
\end{enumerate}
\end{thm}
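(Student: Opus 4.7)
The plan is to combine the re-rooting invariance of Theorem~\ref{thm:re-rootinvariance} with the well-known self-similar decomposition of a stable tree at the most recent common ancestor of two independent mass samples.

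First, I would apply Theorem~\ref{thm:re-rootinvariance} with $r=z$: the re-rooted tree $(T,d_T,z,\mu)$ has the same law as $(T,d_T,\rho,\mu)$, and conditional on this new rooted tree, $x$ and $y$ are still independent $\mu$-samples (they were i.i.d.\ $\mu$ independently of the root). Crucially, the unique common branchpoint $b$ of $\{x,y,z\}$ depends only on the unrooted metric structure, and in the tree rooted at $z$ it coincides with the most recent common ancestor of $x$ and $y$. Likewise, $T_b^x$ and $T_b^y$ (whose definition $T_b^x=\{w:b\notin \oseg{x}{w}\}$ does not reference the root) become the subtrees above this MRCA containing $x$ and $y$, respectively.

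Second, I would invoke the self-similar decomposition of a stable tree at the MRCA of two independent $\mu$-samples. Concretely: if $\rT^*$ is an $\alpha$-stable tree with root $r_0$ and $U_1,U_2$ are independent mass samples with MRCA $b^*$, then writing $m_i=\mu(T_{b^*}^{U_i})$, the rescaled subtrees $(T_{b^*}^{U_i},m_i^{-1+1/\alpha}d,b^*,\mu_{T_{b^*}^{U_i}})$ for $i=1,2$ are independent $\alpha$-stable trees, jointly independent of $(m_1,m_2)$, and conditional on its subtree, each $U_i$ is a $\mu_{T_{b^*}^{U_i}}$-sample. For $\alpha=2$ this is Aldous's decomposition of the CRT~\cite{CRT1}; for $\alpha\in(1,2)$ it follows from Duquesne--Le Gall's analysis of the genealogy of stable trees~\cite{DuqLG_05}, combined with the interpretation of the branchpoint mass $A(b)$ as a local time and the ensuing scaling exponent $-1+1/\alpha$ for subtree masses at a uniform branchpoint. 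Specialising this decomposition to $U_1=x,\ U_2=y$ in the tree re-rooted at $z$ yields part~(1) together with the fact that, conditional on $\rT^x$ (resp.\ $\rT^y$), the point $x$ (resp.\ $y$) is a rescaled mass sample.

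Third, for the remaining half of part~(2)---that $b$ is also a rescaled mass sample of $\rT^x$, independent of $x$---I would apply Theorem~\ref{thm:re-rootinvariance} once more, now within $\rT^x$ (which is a stable tree by part~(1)). Re-rooting invariance says that the conditional law of the root of a stable tree, given its unrooted measured metric structure, is that of a sample from its mass measure; in particular, the pair consisting of the root $b$ of $\rT^x$ and the independent mass sample $x$ has the same conditional law as a pair of i.i.d.\ mass samples.

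The main obstacle, as usual for these self-similarity statements, lies in step~2: carefully extracting the two-sample decomposition from the literature with the correct scaling exponent in the regime $\alpha\in(1,2)$, where $b$ has infinite degree and the notion of ``mass'' at $b$ is furnished by the measure $\Lambda$ rather than being geometric. Everything else is a clean application of re-rooting invariance.
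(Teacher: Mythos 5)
Your proof is essentially correct and, in the end, rests on the same literature as the paper. The paper's entire proof is a one-line citation: Theorem~2 of Aldous~\cite{Ald_RecSelfSim} for $\alpha=2$ and Corollary~10 of Haas--Pitman--Winkel~\cite{HaasPitmanWinkel} for $\alpha\in(1,2)$, both of which are formulated \emph{directly} for the decomposition of a stable tree at the common branchpoint of three independent mass samples. Your route is slightly more circuitous but logically equivalent: you re-root at $z$, invoke a two-sample (MRCA) decomposition of the rooted stable tree, and then apply re-rooting invariance once more inside $\rT^x$ to identify the root $b$ as a mass sample independent of $x$. That last step is sound (re-rooting invariance does give that the conditional law of the root given the unrooted measured tree is the mass measure, and $x$ is conditionally independent of $b$ given the unrooted tree since its conditional law given the rooted tree is always the mass measure). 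The two- and three-sample decompositions are interderivable via exactly this kind of re-rooting, so your approach and the paper's are morally the same; the paper's is more economical because the cited results already come in the three-point form, whereas your step~2 has to extract a two-sample version ``with the correct scaling exponent'' from Duquesne--Le Gall, which you rightly flag as the delicate point. Citing HPW Corollary~10 directly (as the paper does) bypasses that extraction.
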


\begin{proof}
See Theorem 2 of Aldous~\cite{Ald_RecSelfSim} for the case $\alpha = 2$ and Corollary 10 of Haas, Pitman and Winkel~\cite{HaasPitmanWinkel} for $\alpha \in (1,2)$.
\end{proof}

We refer the reader to \cite{DuqLG_02, DuqLG_05,Mi_03,Mi_05} for more on the theory of stable trees. 

\section{The cut-tree of an $\R$-tree: general theory}\label{sec:cut_general}

\subsection{Defining branch lengths for the cut-tree}\label{sec:cut_tree_def}
Throughout Section~\ref{sec:cut_general} we let $\rT=(T,d_T,\mu)$ be an $\R$-tree with $\mu(T)=\mu(\leaf(T))=1$. We further fix a $\sigma$-finite Borel measure $\lambda$ on $T$ with $\lambda(\leaf(\rT))=0$, and let $\cP = ((p_i,t_i),i \in I)$ be a Poisson point process on $T\times[0,\infty)$ with intensity measure $\lambda \otimes \d t$.

We view each point $p_i$ as a {\em cut}, which arrives at time $t_i$. 
For all $t \ge 0$ and $x \in T\setminus \{p_i : t_i \le t\}$, let $T(x,t)$ be the connected component of $T \setminus \{p_i : t_i \le t\}$ containing $x$, and let $\rT(x,t)$ be the corresponding $\R$-tree.  For $x \in \{p_i : t_i \le t\}$, let $\rT(x,t)$ be the subtree of $\rT$ containing only the point $x$. 

For distinct points $x,y\in T$, let $(t(x,y),p(x,y))$ be the point of $\cP$ which first ``strictly separates'' $x$ and $y$, so $p(x,y) \in \oseg{x}{y}$ and $t(x,y)$ is minimal subject to this. Also, set $t(x,x)=\infty$. 
Next, for $S \subset T$, let $t(S) = \inf\{t(x,y): x,y \in S\}$ be the first time a cut separates two elements of $S$. 

Fix $x \in T$. For $t \ge 0$, we define 
\[
\ell(x,t) = \int_0^t \mu(T(x,s)) \d s, 
\]
and $\ell(x)=\ell(x,\infty)$. 
Then, for $x,y \in T$, let 
\[
D(x,y) = \ell(x)+\ell(y) - 2\ell(x,t(x,y)) = \ell(x)+\ell(y)-2\ell(y,t(x,y))\, .
\]
Clearly $D(x,y)=D(y,x)$. 

Note that $t(x,y)$ is exponentially distributed with 
parameter $\lambda(\seg{x}{y})$. Since $\lambda$ is 
$\sigma$-finite, $\lambda(\seg{x}{y})$  is finite, so $D(x,y)$ is a.s.\ positive. 
Nonetheless, it is possible that $D(x,y)=0$ for some pairs $x,y$ with $x \ne y$. However, $D$ a.s.\ defines a pseudo-metric. 
\begin{prop}
$D$ satisfies the triangle inequality. 
\end{prop}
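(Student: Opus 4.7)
The plan is to show, for any three points $x,y,z \in T$, the inequality $D(x,z)\le D(x,y)+D(y,z)$ by a case analysis driven by which pair is separated first. The key geometric input I will use is that, among the three times $t(x,y)$, $t(y,z)$, $t(x,z)$, the two smallest are always equal; after that, the inequality reduces to monotonicity of $\ell(\cdot,t)$ in $t$.

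First I would record the easy identity $\ell(x,t(x,y))=\ell(y,t(x,y))$, which follows because, by definition of $t(x,y)$, the points $x$ and $y$ lie in the same connected component for every $s<t(x,y)$, so $T(x,s)=T(y,s)$ and the integrands defining $\ell(x,s)$ and $\ell(y,s)$ coincide on $[0,t(x,y)]$. More generally, if all three of $x,y,z$ sit in a common component up to time $t$, the three functions $\ell(x,\cdot), \ell(y,\cdot), \ell(z,\cdot)$ agree on $[0,t]$.

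Next I would establish the two-smallest-equal claim. Let $b$ denote the median of $\{x,y,z\}$, so $\seg{x}{y}=\seg{x}{b}\cup\seg{b}{y}$, $\seg{y}{z}=\seg{y}{b}\cup\seg{b}{z}$, and $\seg{x}{z}=\seg{x}{b}\cup\seg{b}{z}$. Let $t^*=\min(t(x,y),t(y,z),t(x,z))$ and let $p^*$ be the point of $\cP$ realising it. Since $p^*$ strictly separates some pair, it lies in at least one of $\oseg{x}{y}$, $\oseg{y}{z}$, $\oseg{x}{z}$, and hence, by the decomposition above, on one of the three segments $\oseg{x}{b}$, $\oseg{b}{y}$, $\oseg{b}{z}$ (with $b$ itself forcing all three times to be equal). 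If, say, $p^*\in \oseg{x}{b}$, then $p^*$ also strictly separates $x$ from $z$, so $t(x,z)\le t^*$ and hence $t(x,z)=t^*=t(x,y)$. The other two placements are analogous, so up to relabelling there are three cases (which may overlap): (a) $t(x,y)=t(x,z)\le t(y,z)$, (b) $t(y,z)=t(x,z)\le t(x,y)$, (c) $t(x,y)=t(y,z)\le t(x,z)$.

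Finally I would verify the triangle inequality in each case by direct algebraic manipulation, using the previous observations. In case~(a), writing $t^*=t(x,y)=t(x,z)$, direct expansion gives
\[
D(x,y)+D(y,z)-D(x,z) = 2\bigl(\ell(y)-\ell(y,t(y,z))\bigr) \ge 0,
\]
which holds because $s\mapsto \ell(y,s)$ is non-decreasing. Case~(b) is symmetric under $x\leftrightarrow z$. In case~(c), setting $t^*=t(x,y)=t(y,z)$ and using $\ell(x,t^*)=\ell(y,t^*)$ (all three points still cohabit a component at time $t^*$), the inequality rearranges to
\[
\ell(x,t(x,z))\ge \ell(x,t^*) + \bigl(\ell(y,t^*)-\ell(y)\bigr),
\]
and the right-hand side is at most $\ell(x,t^*)\le \ell(x,t(x,z))$, again by monotonicity. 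The degenerate cases where two of $x,y,z$ coincide (giving $t=\infty$ and $D=0$) are immediate. The only real obstacle is the first step, identifying the geometric fact that the two smallest separation times always agree; once that is in hand, the algebra is routine.
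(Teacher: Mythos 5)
Your proof is correct and rests on exactly the same two ingredients as the paper's: the ultrametric-type inequality for the separation times $t(\cdot,\cdot)$, and the monotonicity of $\ell(\cdot,t)$ in $t$. But you arrive there in a more roundabout way. The paper simply asserts $t(x,z)\ge\min(t(x,y),t(y,z))$, assumes WLOG $t(x,z)\ge t(x,y)$, and then a single expansion
\[
D(x,y)+D(y,z)-D(x,z)=2\bigl[\bigl(\ell(y)-\ell(y,t(y,z))\bigr)+\bigl(\ell(x,t(x,z))-\ell(x,t(x,y))\bigr)\bigr]\ge 0
\]
finishes the argument in every case, since both bracketed terms are already non-negative under that single WLOG. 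Your three-way split (a)/(b)/(c) and the stronger claim that the two smallest of the three times actually coincide are therefore unnecessary: in your case (c), both bracketed terms above are non-negative directly, so the detour via $\ell(x,t^*)=\ell(y,t^*)$ is not needed.

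Where your write-up adds something is the geometric justification of the ultrametric inequality via the branchpoint $b$ of $\{x,y,z\}$ — the paper states it without proof. Two small caveats there. First, your parenthetical ``with $b$ itself forcing all three times to be equal'' is not quite right when $b$ coincides with one of the three points: if $b=y\in\oseg{x}{z}$ and the first cut on $\seg{x}{z}$ falls exactly at $y$, that cut separates $x$ from $z$ but does \emph{not} lie in $\oseg{x}{y}$ or $\oseg{y}{z}$, so it only affects $t(x,z)$. In that degenerate case both the ultrametric inequality and the triangle inequality can genuinely fail. The paper's proof quietly has the same gap; it is harmless for the intended application because the points one ultimately cares about (the $u_i$, later i.i.d.\ from $\mu$) a.s.\ avoid the atoms of $\lambda$, so no cut ever lands exactly on them — but your more explicit argument makes it worth flagging. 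Second, once you have noted that $p^*$ lies in one of $\oseg{x}{b}$, $\oseg{b}{y}$, $\oseg{b}{z}$ and therefore strictly separates two of the three pairs simultaneously, you already have the inequality $t(x,z)\ge\min(t(x,y),t(y,z))$ (up to relabelling), and could drop straight into the paper's one-line computation rather than enumerating cases.
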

\begin{proof}
Fix $x,y,z \in T$. We have $t(x,z) \ge \min(t(x,y),t(y,z))$, so assume without loss of generality that $t(x,z) \ge t(x,y)$. We then have 
\[
D(x,y)+D(y,z) = D(x,z) + 2[\ell(y)-\ell(y,t(y,z)) + \ell(x,t(x,z))-\ell(x,t(x,y))]. 
\]
The quantity in square brackets is non-negative since $\ell$ is non-decreasing in its second argument.
\end{proof}
Now fix a sequence $\ru=(u_i,i \ge 1)$ of distinct points of $T$. 
The next proposition describes a tree encoding the genealogical structure that $\cP$, viewed as a cutting (or fragmentation) process, induces on the elements of $\ru$. 

\begin{prop}\label{prop:build_cut}
Suppose that almost surely $\ell(u_i) < \infty$ for all $i \ge 1$. 
Then almost surely, up to isometry-equivalence there is a unique $\R$-tree 
$\cC^\circ=\cC^\circ(\rT,\cP,\ru):= (C^\circ,d^\circ,\rho)$ containing points $\{\rho\} \cup \N$ and satisfying the following properties. 
\begin{enumerate}
\item The points $C^\circ$ of $\cC^\circ$ satisfy
$C^\circ = \bigcup_{i\in \N} \seg{\rho}{i}$.
\item For all $i,j \in \N$, $d^\circ(\rho,i)=\ell(u_i)$ and $d^\circ(i,j)=D(u_i,u_j)$. 
\end{enumerate}
\end{prop}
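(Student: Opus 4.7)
The plan is to build $\cC^\circ$ as the spanning $\R$-tree of a countable metric space whose distances are given by $D$ together with the root-distances $\ell(u_i)$; existence and uniqueness will follow from verifying the four-point condition for this countable metric and then invoking the standard characterization of metric subspaces of $\R$-trees.

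The key combinatorial input is an ``ultrametric-like'' structure on the separation times. Working on the full-probability event on which $\ell(u_i)<\infty$ for all $i$ and $D(u_i,u_j)>0$ for all $i\ne j$ (positivity was noted just before the proposition), fix three distinct indices $i,j,k$ and let $t_\ast = \min\{t(u_i,u_j),t(u_j,u_k),t(u_i,u_k)\}$. Just before time $t_\ast$ the three points $u_i, u_j, u_k$ lie in a single component, so the cut arriving at time $t_\ast$ splits this component into two pieces, leaving exactly one of them (say $u_i$) on one side. This simultaneously gives $t(u_i,u_j)=t(u_i,u_k)=t_\ast$ and $t(u_j,u_k)\ge t_\ast$. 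Writing $\hat t(a,b):=\ell(u_a,t(u_a,u_b))$, one has $\hat t(a,b) = \ell(u_b, t(u_a,u_b))$, since $T(u_a,s)=T(u_b,s)$ for all $s \le t(u_a,u_b)$, so $\hat t$ is well-defined and symmetric; the analysis above then yields $\hat t(i,j)=\hat t(i,k)\le \hat t(j,k)$ together with $\hat t(a,b)\le \min(\ell(u_a),\ell(u_b))$. Extending by convention via $\hat t(\rho,a):=0$ and $d^\circ(\rho,a):=\ell(u_a)$, one has the uniform formula $d^\circ(x,y) = h(x) + h(y) - 2\hat t(x,y)$ on $\{\rho\}\cup\N$, where $h(\rho)=0$ and $h(i)=\ell(u_i)$.

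Under this formula, the four-point condition for $d^\circ$ is equivalent to the statement that for any four elements $x_1,x_2,x_3,x_4$ of $\{\rho\}\cup\N$, the two smallest of the three sums $\hat t(x_1,x_2)+\hat t(x_3,x_4)$, $\hat t(x_1,x_3)+\hat t(x_2,x_4)$, $\hat t(x_1,x_4)+\hat t(x_2,x_3)$ are equal. When none of the $x_k$ is $\rho$, a short case analysis on how the first cut affecting the four points splits them (either one-against-three or two-against-two), combined with the three-point identities established above, handles each configuration; when some $x_k$ equals $\rho$, the claim collapses to the three-point identity on the remaining indices. Once the four-point condition is in hand, a standard result (see, e.g.\ \cite{epw}) produces an isometric embedding of $(\{\rho\}\cup\N,d^\circ)$ into an $\R$-tree, and the minimal such $\R$-tree is uniquely determined up to isometry and coincides with $\bigcup_i \seg{\rho}{i}$; finiteness of each $\ell(u_i)$ ensures each segment is a genuine compact interval. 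The main obstacle is the bookkeeping in the four-point verification; the ultrametric-like structure on the $\hat t(\cdot,\cdot)$ plus the monotonicity of $s\mapsto \ell(u_i,s)$ are the only real ingredients.
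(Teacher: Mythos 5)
Your approach is correct in substance but takes a genuinely different route from the paper's. The paper works directly with the partition-valued structure of the fragmentation: for each $n$, the cuts induce a partition process on $[n]$ whose genealogy defines a finite rooted tree $F$ with leaves $[n]$; edge lengths are assigned using the cut times $t(\{u_j,j\in L(\cdot)\})$, and both uniqueness and existence are obtained via projective limits as $n\to\infty$. By contrast, you first extract an ultrametric-like three-point structure for the separation levels $\hat t(a,b)=\ell(u_a,t(u_a,u_b))$ (symmetric because $T(u_a,s)=T(u_b,s)$ for $s<t(u_a,u_b)$), translate it via $d^\circ(x,y)=h(x)+h(y)-2\hat t(x,y)$ into the four-point condition, and then invoke the abstract characterization of metrics embeddable in $\R$-trees, taking the minimal spanning tree as $\cC^\circ$. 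Both are valid; yours makes explicit a structural fact (ultrametricity of the separation levels) that the paper keeps implicit and outsources the construction to a known embedding theorem, while the paper's version is fully self-contained and produces the finite approximating trees that are reused later in the section (e.g.\ for the definition of $\cC(n)$ and the measures $\nu_n$).

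One point you should tighten: the case analysis ``either one-against-three or two-against-two'' is not exhaustive in this general framework. A single cut at a point $p$ may split a component into $\deg(p)$ pieces, and in particular for $\alpha$-stable trees with $\alpha\in(1,2)$ the cut points are branchpoints of infinite degree. The first cut affecting a quadruple may therefore produce partitions such as $\{x_1\},\{x_2\},\{x_3,x_4\}$ or four singletons. These additional configurations are handled by the same argument (all cross-block $\hat t$-values equal the cut level $\tau$, all within-block values are $\ge\tau$, hence at least two of the three pairwise $\hat t$-sums equal $2\tau$ or reduce to a three-point check), but the enumeration should be completed. It would also be worth noting that since $\lambda(\leaf(\rT))=0$, almost surely none of the $u_i$ is itself a cut point, so the partition structure is well defined on the sampled points.
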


\begin{proof}
First, the integrability condition implies that, almost surely, 
$d^\circ(i,j) < \infty$ for all $i,j \in \N \cup \{\rho\}$. For $n \in \N$ we write $[n]=\{1,\ldots,n\}$. 

We next show that for any metric $D$ on $\N\cup \{\rho\}$, up to isometry there is at {\em most} one $\R$-tree containing $\N$
whose restriction to $\N$ is isometric to $(\N,D)$. Indeed, suppose $\rR=(R,d_R)$ and $\rR'=(R',d_{R'})$ are two such trees. Then for all $n \in \N$, the subtrees of $R$ and $R'$ induced by $\bigcup_{i,j \in [n] \cup \{\rho\}} \seg{i}{j}$ are easily seen to be isometric. Further, any isometry between them induces an isometry of the subtrees spanned by $\bigcup_{i,j \in [k] \cup \{\rho\}} \seg{i}{j}$, for any $k< n$. We may thus take a projective limit to obtain an isometry between $\rR$ and $\rR'$. 

It remains to prove existence, which in fact follows in much the same way once we verify that the distances specified by $D$ are ``tree-like''. More precisely, suppose that for each $n \in \N$, there exists an $\R$-tree $\rR_n=(R_n,d_n)$ containing $[n]\cup \{\rho\}$ such that $D(i,j)=d_n(i,j)$ for all $i,j \in [n]\cup \{\rho\}$ and such that $R_n = \bigcup_{i\le n} \seg{\rho}{i}$. Then a projective limit of the sequence $\rR_n$ has the required properties. 

Finally, for any $t \ge 0$, the collection of cuts $\{p_i: t_i \le t\}$ induces a partition of $[n]$: for $j,k \in [n]$, $j$ and $k$ lie in the same part at time $t$ if $t(u_j,u_k) > t$ and neither $u_j$ nor $u_k$ is an element of $\{p_i: t_i \le t\}$. This partition-valued process has an evident genealogical structure, and so describes a rooted discrete tree $F$ with leaves $[n]$. 
In this picture, $\rho$ is simply the root of $F$. Note that $\rho$ has degree one; see Figure~\ref{fig:cut_gen}.

\begin{figure}
\centering
\begin{subfigure}[b]{0.3\textwidth}
		\includegraphics[width=\textwidth]{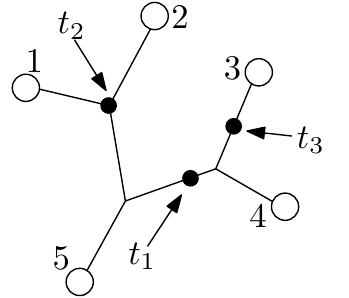}
        \end{subfigure}%
        \quad
\begin{subfigure}[b]{0.3\textwidth}
	\includegraphics[width=\textwidth,page=2]{partitionstructure.pdf}
	\label{fig:gen_tree}
        \end{subfigure}%
	\caption{Left: the subtree of $\rT$ spanned by vertices $u_1,\ldots,u_5$. Cuts on this subtree arrive at times $t_1 < t_2 < t_3$. Right: the resulting genealogical tree $F$ with leaves $1,\ldots,5$. The figure indicates the correspondence between cuts and branchpoints of $F$.}	\label{fig:cut_gen}
\end{figure}
To each internal node $v$ of $F$, let $L(v)$ be the set of leaves which are descendants of $v$. For each edge $vw$ of $F$, with $v$ an internal node and $w$ a child of $v$, give $vw$ length $t(\{u_j,j \in L(w)\})-t(\{u_j,j \in L(v)\})$. Finally, the child of $\rho$ is the unique internal node $v$ with $L(v)=[n]$; let the edge $\rho v$ have length $t(\{u_j,j \in L(v)\})=t(\{u_j,j \in [n]\})$. 
The resulting $\R$-tree has the correct distance between any pair $a,b\in [n]\cup \{\rho\}$, and is spanned by the paths between such pairs; this completes the proof of existence. 
\end{proof}

It deserves emphasis that the elements of $\N$ are {\em random} points of the random tree $\cC^\circ$: it is not possible to recover their locations from $\cC^\circ$ alone. 
In fact an analogue of the preceding proposition also holds in the setting where $\ell(x)$ is not $\mu$-a.e.\ finite. Though we do not require this case in the current work, its elaboration introduces several ideas we do use, so we now describe it. For $t \ge 0$, let 
\[
D_t(x,y) = \ell(x,t)+\ell(y,t) - 2\ell(x,t \wedge t(x,y)). 
\]
This is almost surely finite for any fixed $x,y \in T$, and $D_t(x,y) \uparrow D(x,y)$ as $t \to \infty$. Following the proof of Proposition~\ref{prop:build_cut} shows that there is a unique (up to isometry-equivalence) $\R$-tree $\cC_t=(C_t,d_t,\rho)$ satisfying the obvious modifications of conditions (1) and (2). Note, however, that for each $j \in \N$ there are a.s.\ infinitely many $k \in \N$ with $D_t(j,k)=0$. 

The trees $\cC_t=(C_t,d_t,\rho)$ are increasing in the sense that for $t < t'$, $\cC_t$ may be realised as a subtree of $\cC_{t'}$. We may therefore define $\cC^\circ$ as the increasing limit of the process $(\cC_t,t \ge 0)$. 
This definition agrees with that of Proposition~\ref{prop:build_cut} when $\ell(x)$ is almost surely finite for $\mu$-a.e.\ $x$. It additionally endows $\mathrm{skel}(\cC^{\circ})$ with a labelling by ``arrival time'': for $x \in \mathrm{skel}(\cC^{\circ})$, let $\alpha(x) = \inf\{t: x \in C_t\}$. 

To see that this is a measurable quantity, first note that for fixed $j \in \N$, 
the geodesic $\seg{\rho}{j}\setminus\{j\}$ is isometric to the line segment $[0,\ell(u_j))$. The function $m:[0,\infty) \to [0,1)$ given by $m(s) = \mu(T(u_j,s))$ is clearly measurable.
Thus, for $x \in \seg{\rho}{j} \setminus \{j\}$, let
\begin{equation}\label{eq:alpha_formula}
\alpha(x) = \inf\left\{t: \int_0^t m(s)\mathrm{d}s \ge d^\circ(\rho,x) \right\} = \inf\{t: \ell(u_j,t) \ge d^\circ(\rho,x)\}.
\end{equation}
It is easily seen that these labelings are consistent in that the label $\alpha(x)$ does not 
depend on the choice of $j$ with $x \in \seg{\rho}{j}\setminus\{j\}$. 

Write $(C,d,\rho)$ for the completion of $\cC^{\circ}=(C^\circ,d^\circ,\rho)$. 
Note that the elements of $\N$ are points of $\cC^{\circ}$ and thus of $(C,d,\rho)$. For $n \in \N$, let 
$\cC(n)$ be the subtree of $\cC^\circ$ spanned by $\{\rho\} \cup [n]$, so having points $\bigcup_{j \le n} \seg{\rho}{j}$. This is essentially the tree $\rR_n$ from within the proof of Proposition~\ref{prop:build_cut}. 

\subsection{Measures on the cut-tree}\label{sec:measures} 
We next define, for each $n \in \N$, a measure $\nu_n$ on $(C,d,\rho)$ whose support is $C(n)$. 
Let
\[
\cP_n = \{(p_i,t_i) \in \cP~:~ \exists j \le n\mbox{ such that } \mu(T(u_j,t_i)) < \mu(T(u_j,t_i^-))\}. 
\]
This is the set of points whose cuts reduce the mass of the subtree containing some point $\{u_j, j \le n\}$. For $s \ge 0$ let 
$\cP_n(s) = \{(p,t) \in \cP_n: t \le s\}$. We likewise define $\cP(s) = \{(p,t) \in \cP: t \le s\}$. 
For the remainder of Section~\ref{sec:cut_general} we assume that the fragmentation induced by $\cP$ conserves mass in that for all $s > 0$, 
\[
\sum_{\text{$T'$ a component of $T\setminus \{p_i: t_i \in \cP(s)$}\}} \mu(T')=1. 
\]

Denote the set of open connected components of $T\setminus \{p_i: (p_i,t_i) \in \cP_n\}$ by 
$\{T_i: i \in I_n\}$. 
We observe that if $\ell(u_j) < \infty$ almost surely for $j \in \N$ then almost surely no component of $\{T_i,i \in I_n\}$ contains an element of $\{u_j,j \le n\}$. 

For each $i \in I_n$, let 
\[
\sigma_i = \inf\{s \ge 0: T_i~\text{is a connected component of}~T\setminus \cP_n(s)\} \, 
\]
be the creation time of $T_i$. Let 
\[
m_i = \min \{j \le n: u_j~\text{and}~T_i~\text{lie in the same component of}~ 
T\setminus\cP_n(\sigma_i-)\}\, 
\]
be the index of the last point to separate from $T_i$, breaking ties by taking the smallest such. Then let 
$x_i$ be the unique point of $C(n)$ on $\seg{\rho}{m_i}$ satisfying $\alpha(x_i)=\sigma_i$. 

Now define a measure $\nu_n$ on $(C,d,\rho)$ with support $C(n)$ by 
\[
\nu_n = \sum_{i \in I_n} \mu(T_i) \delta_{x_i}\, .
\]
We may view the tree $T_i$ as ``frozen'' at time $\sigma_i$ and attached to $C(n)$ at point $x_i$. With this perspective, $\nu_n$ is obtained by projecting the masses of the frozen subtrees onto their attachment points in $C(n)$. We do not explicitly need this construction, however, so do not formalize it. 
\begin{prop}
If $(C,d,\rho)$ is compact then $\nu_n$ is a Cauchy sequence in the space of Borel measures on $C$, so has a weak limit $\nu$. 
\end{prop}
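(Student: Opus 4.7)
The plan is to realise $\nu_n$ as the pushforward $(\phi_n)_*\mu$ of $\mu$ under a measurable map $\phi_n : T \to C$, and then show that $\phi_n(x)$ converges in $C$ for $\mu$-a.e.\ $x$ via a monotonicity argument. Bounded convergence will then yield weak convergence $\nu_n \to \phi_*\mu$, and in particular the Cauchy property. Since $\mu$ is supported on $\leaf(T)$ while $\lambda$ is supported on $\mathrm{skel}(T)$, the cut locations of $\cP$ are $\mu$-null, so for $\mu$-a.e.\ $x \in T$ there is a unique $i_n(x) \in I_n$ with $x \in T_{i_n(x)}$, and we set $\phi_n(x) := x_{i_n(x)}$.

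The key structural claim is that, for $m > n$, $\phi_n(x) \preceq \phi_m(x)$ in the genealogical order of $C$, $\mu$-a.e. Writing $i = i_n(x)$ and $j = i_m(x)$, so that $T_j \subseteq T_i$, a first observation is that no $u_k$ with $k \le n$ can lie in $T_i$: any such $u_k$ would either force a cut of $\cP$ inside $T_i$ (which would then belong to $\cP_n$ and contradict $T_i$ being a component of $T \setminus \cP_n$), or give $T(u_k, s) = T_i$ for all $s \ge \sigma_i$, forcing $\ell(u_k) = \infty$ against the finite diameter of the compact tree $C$. When $T_j = T_i$, a short calculation using this observation yields $\sigma_j = \sigma_i$ and $m_j = m_i$, so $\phi_m(x) = \phi_n(x)$. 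When $T_j \subsetneq T_i$, one has $m_j > n \ge m_i$; since $u_{m_j}$ stays in the same component as $T_j \subseteq T_i$ at all times $s < \sigma_j$ while $u_{m_i}$ is with $T_i$ at time $\sigma_i^-$, the cut at time $\sigma_i$ that creates $T_i$ is the first cut separating $u_{m_i}$ from $u_{m_j}$. Hence $t(u_{m_i}, u_{m_j}) = \sigma_i$, and the construction in Proposition~\ref{prop:build_cut} identifies $x_i$ with $m_i \wedge m_j$ in $C$. As $x_j$ lies on $\seg{\rho}{m_j}$ at arrival time $\sigma_j > \sigma_i$, we obtain $x_i \preceq x_j$ and $d(x_i, x_j) = d(\rho, x_j) - d(\rho, x_i)$.

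With this monotonicity, $(d(\rho, \phi_n(x)))_n$ is nondecreasing and bounded above by $\mathrm{diam}(C) < \infty$, hence converges in $\R$. The genealogical relation then forces $d(\phi_n(x), \phi_m(x)) = |d(\rho, \phi_m(x)) - d(\rho, \phi_n(x))|$ for $n \le m$, so $(\phi_n(x))$ is Cauchy in the complete space $C$ and converges to a limit $\phi(x) \in C$. Bounded convergence applied to $\int (f \circ \phi_n)\, \mathrm{d}\mu$ for each continuous $f : C \to \R$ then yields $\nu_n \to \phi_*\mu$ weakly, completing the proof. The main obstacle is verifying the structural identification $x_i = m_i \wedge m_j$ when $T_j \subsetneq T_i$: this requires carefully tracking which sample points remain with which component through time, and matching this against the arrival-time labelling $\alpha$ on the possibly distinct paths $\seg{\rho}{m_i}$ and $\seg{\rho}{m_j}$ at their common prefix.
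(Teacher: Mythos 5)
Your proof is correct but genuinely different from the paper's. Both arguments hinge on the same structural fact (the atom $x_j$ of $\nu_m$ sits in the subtree of $C$ above the corresponding atom $x_i$ of $\nu_n$), but what you do with it differs. The paper notes this fact ``by construction,'' observes that $\nu_n$ is therefore obtained from $\nu_m$ by projecting mass onto the closest point of $C(n)$, deduces $d_P(\nu_n,\nu_m)\le d_H(C(n),C(m))\le d_H(C(n),C)$, and invokes compactness to make the Hausdorff distance vanish. You instead realise $\nu_n$ as a pushforward $(\phi_n)_*\mu$, prove genealogical monotonicity $\phi_n(x)\preceq\phi_m(x)$, extract $\mu$-a.e.\ convergence of $\phi_n$ in the complete space $C$, and finish with bounded convergence. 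The paper's route is appreciably shorter and avoids grinding through the partition combinatorics; your route is more ``hands-on'' and produces the limit explicitly as $\phi_*\mu$, at the cost of a considerably more delicate verification of the monotonicity step, which the paper does not need to unpack. (Note also that the paper relies on the mass-conservation hypothesis stated just before the proposition to get $\sum_i\mu(T_i)=1$; your pushforward realisation tacitly needs the same thing, and it deserves a word.)

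A small caveat on your case analysis: you split into $T_j=T_i$ (where you claim $\sigma_j=\sigma_i$, $m_j=m_i$) and $T_j\subsetneq T_i$ (where you claim $m_j>n\ge m_i$). In fact when $T_j\subsetneq T_i$ it can still happen that $\sigma_j=\sigma_i$ --- the single cut at time $\sigma_i$ can simultaneously create $T_i$ in the $\cP_n$-picture and $T_j$ in the $\cP_m$-picture --- and in that sub-case one gets $m_j=m_i$ rather than $m_j>n$. This does not break the argument, since there one again finds $\phi_m(x)=\phi_n(x)$, but the dichotomy as you state it is not quite right and the claim $m_j>n$ really needs $\sigma_j>\sigma_i$ (which forces the $\cP_m(\sigma_j{-})$-component of $T_j$ to sit inside $T_i$). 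Similarly, your justification that no $u_k$ with $k\le n$ lies in $T_i$ is in the right spirit (the paper observes the same thing without proof, keyed to $\ell(u_k)<\infty$), but the first alternative of your dichotomy (``any cut inside $T_i$ would be in $\cP_n$'') glides over the distinction between the full Poisson cuts and the cuts in $\cP_n$; one should say that a cut reducing the mass of $T(u_k,\cdot)$ with $k\le n$ is by definition in $\cP_n$, and cuts cutting off $\mu$-null pieces can be ignored.
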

\begin{proof}
Fix $m > n$ and write $\nu_n = \sum_{i \in I_n} \mu(T_i) \delta_{x_i}$ as above. 
Note that $\cP_n$ is increasing in $n$, so we may view $\{T_j,j \in I_m\}$ as a ``refinement'' of $\{T_i,i \in I_n\}$ in the sense that each tree $T_i$ in the latter set is split by the cuts associated with points of $\cP_m\setminus \cP_n$ into a collection of trees $\{T_{i,j}, j \in J_i\}$ which all lie in the former. Furthermore, we exhaust $\{T_j,j \in I_m\}$ in this manner, in that $I_m = \bigcup_{i \in I_n} J_i$. Finally, since $\lambda(\text{leaf}(T))=0$ and $\mu(T)=\mu(\text{leaf}(T))=1$, we also have $\mu(T_i) = \sum_{j \in J_i} \mu(T_{i,j})$. 

We write 
\[
\nu_m = \sum_{i \in I_n} \sum_{j \in J_i} \mu(T_{i,j}) \delta_{x_{i,j}} 
\]
where, by analogy with the above, $x_{i,j}$ is the point of attachment of $T_{i,j}$ to $C(m)$. 
Now note that $x_{i,j} \in C_{x_i}$ by construction. Since the fragmentation conserves mass, it follows that for all $i \in I_n$, 
\[
\mu(T_i) = \sum_{x_{i,j} \in C_{x_i}} \mu(T_{i,j}). 
\]
We may thus obtain $\nu_n$ from $\nu_m$ by projecting all mass of $\nu_m$ onto the closest point of $C(n)$, so $d_{P}(\nu_n,\nu_m) \le d_H(C(n),C(m))$. Since $C(m)$ contains $C(n)$ and is increasing in $m$, this implies that $d_{P}(\nu_n,\nu_m) \le d_H(C(n),C)$, and the final quantity tends to zero as $n \to \infty$ by compactness. 
\end{proof}
In the case when $(C,d,\rho)$ is compact, we write 
$\cC=(C,d,\rho,\nu)$, and call $\cC=\cC(\rT,\cP,\ru)$ the cut-tree of 
$(\rT,\cP,\ru)$, or sometimes simply the cut-tree of $\rT$. 

The tree $\rT$ comes endowed with measure $\mu$. If it happens that 
\[
\mu = \lim_{n \to \infty} \frac{1}{n} \sum_{j \le n} \delta_{u_j}\, , 
\]
then we say that $\mu$ is the empirical measure of the sequence $\ru$. 
In particular, if the elements of $\ru$ are i.i.d.\ with law $\mu$ then this holds by the Glivenko-Cantelli theorem. 
\begin{prop}\label{prop:empirical_measure}
If $\mu$ is the empirical measure of $\ru$ and $(C,d,\rho)$ is compact then $\nu$ is the empirical measure of $\N \subset C$, i.e.
\[
\nu = \lim_{n \to \infty} \frac{1}{n} \sum_{j \le n} \delta_j\, .
\]
\end{prop}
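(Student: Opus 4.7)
The plan is a triangle inequality in the Prokhorov distance, $d_P(\mu_k, \nu) \le d_P(\mu_k, \nu_n) + d_P(\nu_n, \nu)$, where $\mu_k := \tfrac{1}{k}\sum_{j \le k} \delta_j$. By the preceding proposition the second term can be made smaller than any prescribed $\epsilon > 0$ by choosing $n$ large. I would simultaneously arrange $d_H(C(n), C) < \epsilon$, which is available since $C = \overline{C^\circ} = \overline{\bigcup_m C(m)}$ is compact. In an $\R$-tree the projection onto the closed subtree $C(n)$ of any point $p$ lying in the subtree $C_{x_i}$ rooted at an attachment point $x_i$ is exactly $x_i$, so this gives $\sup_{i \in I_n} \sup_{p \in C_{x_i}} d(p, x_i) \le d_H(C(n), C) < \epsilon$.

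The key geometric claim is that for each $i \in I_n$ and each $j > n$ with $u_j \in T_i$, the marked point $j \in C$ lies in $C_{x_i}$. By the definition of $m_i$, the points $u_{m_i}$ and $u_j$ occupy the same connected component of $T \setminus \cP_n(s)$ for all $s < \sigma_i$, yet the cut at time $\sigma_i$---the very cut which creates $T_i$ as a new component---separates $u_j \in T_i$ from $u_{m_i}$. Hence $t(u_{m_i}, u_j) = \sigma_i$, so the distance from $\rho$ to the most recent common ancestor $m_i \wedge j$ in $C$ equals $\ell(u_{m_i}, \sigma_i)$; but $x_i$ is the point on $\seg{\rho}{m_i}$ of arrival time $\sigma_i$, whose distance to $\rho$ is also $\ell(u_{m_i}, \sigma_i)$, forcing $m_i \wedge j = x_i$. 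Thus $x_i \in \seg{\rho}{j}$, so $j \in C_{x_i}$ and $d(j, x_i) < \epsilon$ by the previous paragraph.

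To close the argument I use the empirical-measure hypothesis. Since $\mu$ is concentrated on $\leaf(\rT)$ and the countable cut set $\{p : (p,t) \in \cP_n\}$ is disjoint from $\leaf(\rT)$ (as $\lambda(\leaf(\rT)) = 0$), the boundary of each $T_i$ is $\mu$-null, so $T_i$ is a $\mu$-continuity set. The Portmanteau theorem then yields $\tfrac{1}{k}|\{j \le k : u_j \in T_i\}| \to \mu(T_i)$ for each $i \in I_n$ as $k \to \infty$. Combined with the geometric claim, each atom $\delta_j$ for $j > n$ sits within distance $\epsilon$ of $\delta_{x_{i(j)}}$, where $i(j) \in I_n$ is determined by $u_j \in T_{i(j)}$, so $\mu_k$ approximates $\sum_{i \in I_n} \mu(T_i) \delta_{x_i} = \nu_n$ to Prokhorov distance $\epsilon + o_k(1)$, the $o_k(1)$ absorbing both the mass-convergence error and the $n/k$ contribution from $j \le n$. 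Letting $k \to \infty$ and then $\epsilon \to 0$ completes the proof. I expect the only real obstacle to be the geometric claim, where one must carefully unpack the definitions of $m_i$, $\sigma_i$, $x_i$ and the arrival-time labelling $\alpha$; the remainder is compactness plus standard weak-convergence bookkeeping.
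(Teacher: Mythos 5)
Your proposal is correct and follows essentially the same approach as the paper's proof: both rest on the observation that if $u_j \in T_i$ then the marked point $j$ lies in the subtree of $C$ attached at $x_i$ (which forces $d(j,x_i) \le d_H(C(n),C)$), and both then combine this with the empirical-measure hypothesis and compactness of $C$ to pass to the limit. You supply more detail on the geometric claim (which the paper simply asserts) and on the $\mu$-continuity of the sets $T_i$, but the underlying argument is the same.
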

\begin{proof}
For $n < m$ and for $i \in I_n$ let 
\[
\hat{\mu}_{n,m}(T_i) = \frac{1}{m-n} \#\{n < j \le m: u_j \in T_i\}. 
\]
Note that for $i \in I_n$, if $u_j \in T_i$ then 
$x_i \in \seg{\rho}{j}$, i.e., $j$ lies in a subtree of $C$ which is attached to $C(n)$ at the point $x_i$. We thus have 
\[
\hat{\mu}_{n,m}(T_i) = \frac{1}{m-n} \#\{n < j \le m: x_i \in \seg{\rho}{j}\}. 
\]

Next, since $\mu$ is the empirical measure of $\ru$, for fixed $n$ we have
\[
\lim_{m \to \infty} \hat{\mu}_{n,m}(T_i) 
= 
\lim_{m \to \infty} \frac{1}{m} \#\{1 \le j \le m: u_j \in T_i\} 
= \mu(T_i). 
\]
Now let $\nu_{n,m} = \sum_{i \in I_n} \hat{\mu}_{n,m}(T_i) \delta_{x_i}$. 
Since $\nu_n$ is a probability measure, 
it follows that 
\[
\nu_n = \lim_{m \to \infty} \nu_{n,m}. 
\]

Finally, writing 
\[
\hat{\nu}_{n,m} = \frac{1}{n-m} \sum_{n < j \le m} \delta_j,
\]
we have 
\[
\lim_{n \to \infty} \sup_{m > n} d_P(\hat{\nu}_{n,m},
\nu_{n,m})
\le \lim_{n \to \infty} d_H(C(n),C) = 0, 
\]
the last equality holding by compactness. This yields
\[
\lim_{n \to \infty} \nu_n = 
\lim_{n \to \infty}\lim_{m \to \infty} \nu_{n,m} = 
\lim_{n \to \infty}\lim_{m \to \infty} \hat{\nu}_{n,m} = 
\lim_{m \to \infty} \frac{1}{m} \sum_{1 \le j \le m} \delta_j\, ,
\]
as required. 
\end{proof}

\subsection{Images in $C$ of points of $T$.}\label{sec:image}
In this subsection we assume $\rT$, the intensity measure $\lambda\otimes \d t$ and the sequence $\ru$ of points of $T$ are such that, almost surely, 
$\ell(u_i) < \infty$ for all $i \ge 1$. By Proposition~\ref{prop:build_cut} we may then define $\cC^{\circ}(\rT,\cP,\ru)$ and its completion $(C,d,\nu)$. 
We further assume that $\N$ is dense in $\cC^{\circ}(\rT,\cP,\ru)$, and therefore in $C$. 

For $k \in \N$ it is natural to associate the point $u_k\in T$ with the point $k \in C$; we call $k$ the {\em image} of $u_k$. 
We now define the image (or {\em images}) in $C$ of a fixed point $x \in T$. 

First suppose that $x$ is not one of the cut points $p_i$; 
this holds a.s.\ precisely if $\lambda(\{x\}) = 0$. 
Let $\ru'=(x,u_1,u_2,\ldots)$. We write $k$ for the image of $u_k$ in $\cC^\circ(\rT,\cP,\ru')$, and $x'$ for the image of $x$. 

We now describe how to identify $x'$ with a point of $C$ that is added during completion from $\cC^\circ(\rT,\cP,\ru)$. We identify $\cC^\circ(\rT,\cP,\ru)$ with the subtree of $\cC^\circ(\rT,\cP,\ru')$ spanned by $\N\cup \{\rho\}$. Since $\N$ is a.s.\ dense in $\cC^\circ(\rT,\cP,\ru')$, we may find a sequence $(i_k,k \ge 1)$ of natural numbers such that in $\cC^\circ(\rT,\cP,\ru')$ we a.s.\ have $i_k \to x'$ as $k \to \infty$. This is also a Cauchy sequence in $\cC^\circ(\rT,\cP,\ru)$, so has a limit in $(C,d,\rho)$. We identify this limit with $x'$; it is the image of $x$ in $(C,d,\rho)$.  It is important that we may then view $\cC^\circ(\rT,\cP,\ru')$ as a subtree of $(C,d,\rho)$ and that, with this perspective, $(C,d,\rho)$ is also the completion of $\cC^\circ(\rT,\cP,\ru')$. 

Next suppose $x$ is one of the cut points $p_\ell$, so $(x,t_\ell) \in \cP$. 
In this case, there is an image of $x$ corresponding to each connected component of $T\setminus \{x\}$. We describe how to find the image 
of $x$ corresponding to a fixed such component $\hat{T}$. 

The idea is to view the cut-tree process as acting on $\hat{T}$, starting at time 
$t_\ell$, and thereby identify the image of $x$. The key point is that the cut-tree behaves nicely under restriction, in a sense we now explain. 
Let 
\[
\hat{\cP} = \{ (p_i,t_i) \in \cP: p_i \in T(x,t_\ell-)\cap \hat{T}, t_i > t_\ell\}\, . 
\]
The set $T(x,t_\ell-)\cap \hat{T}$ is the subtree separated at time $t_\ell$ which is contained in $\hat{T}$, and $\hat{\cP}$ is the set of Poisson points falling 
in this subtree after time $t_{\ell}$. 

Next, let $\rv$ be the set of points $u_i \in T(x,t_\ell-)\cap \hat{T}$; 
for concreteness we list these in increasing order of index as 
$\rv=(v_i,i \ge 1)$. Let $T' = \{x\} \cup (T(x,t_\ell-)\cap \hat{T})$, and let $\rT'=(T',d_{T'})$ be the subtree of $\rT$ induced by $T'$. 

Let $\cP' = \{(p,t-t_\ell): (p,t) \in \hat{\cP}\}$, 
and observe that $\cP'$ is a Poisson process on $T' \times [0,\infty)$ 
with intensity measure $\lambda|_{T'\setminus \{x\}} \otimes \d t$. 
The completion of the tree $\cC^\circ(\rT',\cP',\rv)$ is now isometric to a subtree $C'$ of $(C,d,\rho)$. 
Furthermore, this isometry is uniquely specified by requiring that the images of the points of $\rv$ agree with the images of the corresponding elements of the sequence $\ru$. Since $x$ is not hit by $\cP'$, it also has an image $x'$
in $\cC^\circ(\rT',\cP',\rv)$, which we view as contained in $\cC^\circ(\rT,\cP,\ru)$ using the isometry just described. 

It is important that $x'$ depends on the choice of a component $\hat{T}$ of $T\setminus \{x\}$. When we need to make this dependence explicit we will write $x'(\hat{T})$. 
\subsection{Routing}\label{sec:routing}

In this subsection we let $(C,d,\rho)$ be a rooted $\R$-tree. As this notation may suggest, we will apply the following constructions to a cut-tree; however, the quantities make sense more generally. 
While reading the following definitions, the reader may find it helpful to refer to the description of reconstruction in the discrete setting given in Section~\ref{subsec:general} for intuition. 

Given $v,w \in C$, a {\em signpost} for $v$ and $w$ is a pair $(v_1,w_1)$ with $v_1 \in C_{v \wedge w}^v$ and $w_1 \in C_{v \wedge w}^w$. 
Let $B = \{\emptyset\} \cup \bigcup_{n \in\N} \{0,1\}^n$, and view $B$ as indexing the vertices of a complete infinite binary tree with root $\emptyset$. 
If $b \in \{0,1\}^n$ write $|b|=n$. For $b,b' \in B$, if $b$ is a prefix of $b'$ we write $b \le b'$. This agrees with the genealogical order when $B$ is viewed as a tree. 

A {\em routing} for $v$ and $w$ is a collection $(r_b, b \in B \setminus \{\emptyset\})$ with the following properties. First, $r_0=v$ and $r_1=w$. 
Next, for all $n \ge 1$, for all $b \in \{0,1\}^n$ we have $r_{b0}=r_b$; 
and for all $b \in \{0,1\}^{n-1}$, $(r_{b01},r_{b11})$ is a signpost for $(r_{b0},r_{b1})$.

\begin{figure}
\centering
		\includegraphics[width=0.8\textwidth]{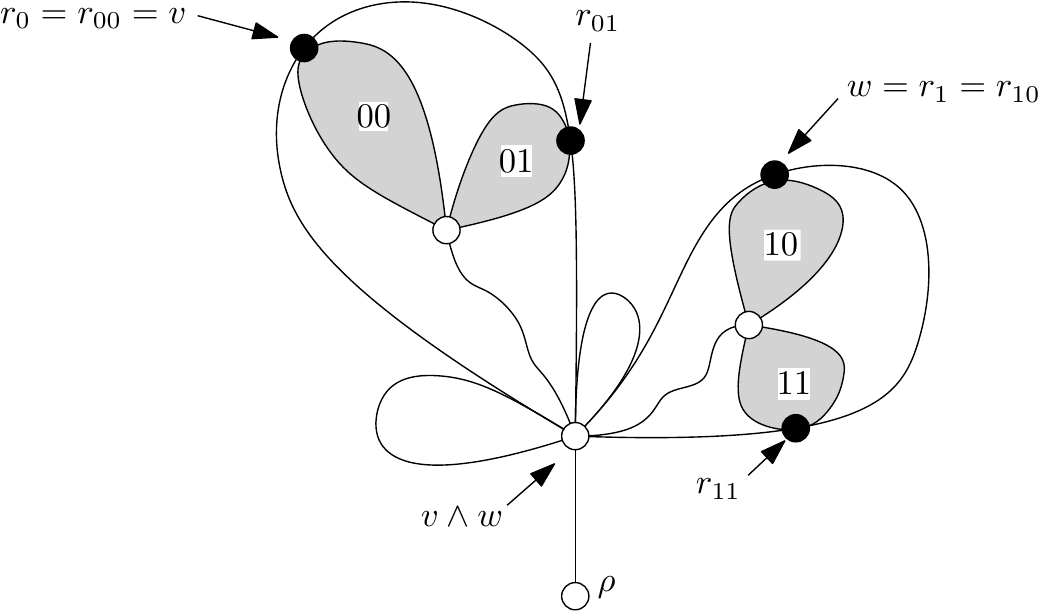}
		\caption{Part of a routing for a pair $v,w$ of points of $C$. The pair $(r_{01},r_{11})$ is a signpost for $(v,w)$. For each $b \in \{0,1\}^2$, we must have $r_{b0}=r_b$, and $r_{b1}$ must lie within the grey subtree labelled with $b$.}
	\label{fig:dec}
\end{figure}

Now fix a set $N \subset C$ such that $\overline{\bigcup_{i \in N} \seg{\rho}{i}}=C$. For each pair $(i,j)$ of distinct elements of $N$ let $r^{ij} = (r^{ij}_b, b 
\in B \setminus \{\emptyset\})$ be a routing for $i$ and $j$. We say the collection $(r^{ij},i,j \in N,i \ne j)$ of routings is \emph{consistent} if the following three properties hold. In words, the first says that $r^{ji}$ is always obtained from $r^{ij}$ by swapping the subtrees at the root of $B$. The second says that sub-routings are themselves routings for the appropriate pairs.  The third says that for any branchpoint $x$ of $C$ and any $y,z \in C_x$, if the subtrees $C_x^y$ and $C_x^z$ are the same then all signposts at $x$ in directions $y$ and $z$ are also the same. 
\begin{enumerate}
\item For any distinct $i,j \in N$, for all $b \in B$, $r^{ij}_{0b} = r^{ji}_{1b}$. 
\item For any distinct $i,j \in N$, distinct $k,l \in N$ and $b \in B$, if $(k,l) = (r^{ij}_{b0},r^{ij}_{b1})$ then 
$r^{kl}_a = r^{ij}_{ba}$ for all $a \in B \setminus \{\emptyset\}$. 
\item 
For any distinct $i,j \in N$ and distinct $k,l \in N$, if $i \wedge j=k\wedge l$ and $C_{i \wedge j}^i=C_{k \wedge l}^k$ 
then $r^{kl}_{01}=r^{ij}_{01}$. 
\end{enumerate}
Suppose $(r^{ij},i,j \in N,i \ne j)$ is consistent. Then, for each $x \in \br(C)$ and each subtree $C'=C_x^y$ above $x$, fix $i,j \in N$ with $i \wedge j=x$ and $C_{i\wedge j}^i=C'$, and let $f(x,C')=r^{i,j}_{01}$. The consistency conditions guarantee that the value of $f(x,C')$ does not depend on $i$ and $j$ satisfying these properties. 

Conversely, suppose that we are given the data $f(x,C')$ for all such pairs $(x,C')$. Then we may reconstruct the collection of routings by setting $r^{i,j}_{01}=f(x,C')$ for all pairs $i,j \in N$ with $i \wedge j=x$ and $C_{i\wedge j}^i=C'$; the consistency conditions then uniquely determine all other routing data. 

\subsection{Routing in cut-trees}\label{sec:rout_cut}
Now suppose that $(C,d,\rho)$ is constructed as in Section~\ref{sec:image}, 
so $(C,d,\rho)$ is the completion of $\cC^\circ=\cC^\circ(\rT,\cP,\ru)$ for a suitable triple $(\rT,\cP,\ru)$ with the property that $\ell(u_i) < \infty$ a.s.\ for all $i \in \N$. Recall that we are treating $\N$ as a collection of random points of $C$. We again assume that $\N$ is dense in $C$.  
For any distinct $i,j \in \N \subset C$, 
the cut-tree construction described above then yields a routing 
$\rR^{ij} = (R^{ij}_b,b \in B \setminus \{\emptyset\})$, built as follows. In reading the description, Figures~\ref{fig:routing} and~\ref{fig:routing2} should be useful. In both figures, the superscripts $ij$ are ommitted for readability.

\begin{figure}
\centering
		\includegraphics[width=0.7\textwidth]{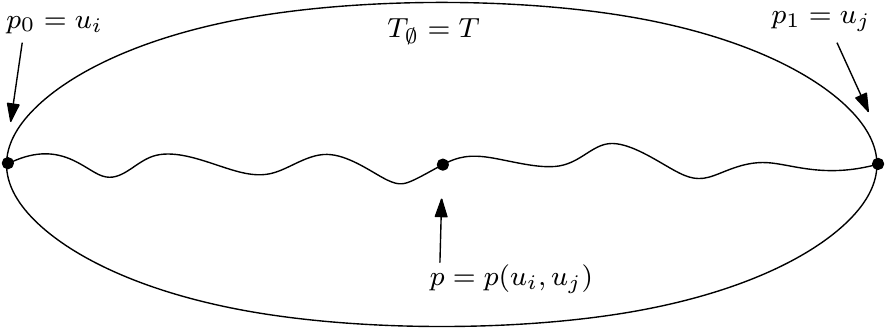}
		\includegraphics[width=0.7\textwidth,page=2]{routings.pdf}
		\includegraphics[width=0.7\textwidth,page=3]{routings.pdf}
	\caption{Top: The tree $T$ with points $u_i$ and $u_j$ marked. 
Middle: the first cut point $p$ and the trees $T_0$ and $T_1$. Bottom: $x$ and $y$ are the first cuts to separate $u_i$ and $u_j$ from $p$, respectively.}
		\label{fig:routing}
\end{figure}
\begin{figure}
\centering
		\includegraphics[width=0.4\textwidth]{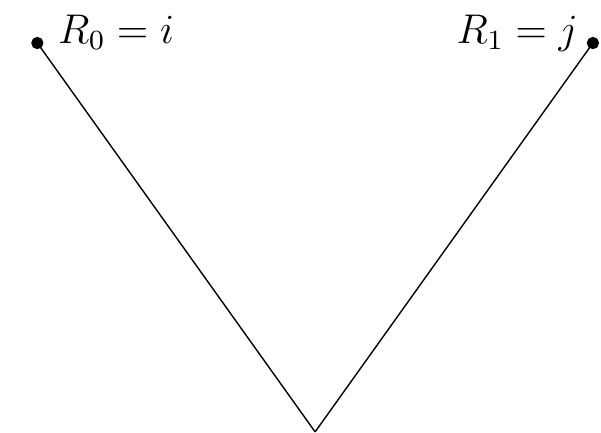}
		\includegraphics[width=0.4\textwidth,page=2]{routing2b.pdf}
		\\
		\vspace{0.5cm}
		\includegraphics[width=0.6\textwidth]{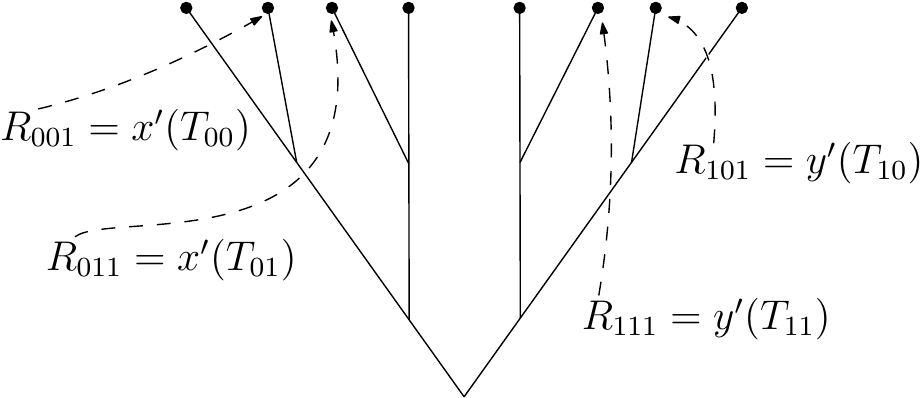}
	\caption{Top left: The images in $C$ of $u_i$ and $u_j$, and the path between them. Top right: The images $R_{01}$ and $R_{11}$ of $p=p_{01}=p_{11}$. Bottom: The images of $x=p_{001}=p_{011}$ and of $y=p_{111}=p_{101}$.}
		\label{fig:routing2}
\end{figure}

First, let $R^{ij}_0=i$ and $R^{ij}_1=j$. Also let $p^{ij}_0=u_i$ and $p^{ij}_1=u_j$. It will later be convenient to set $T^{ij}_{\emptyset}=T$.

Suppose inductively that $(R^{ij}_b,0 < |b| \le n)$ and $(p^{ij}_b,0 < |b| \le n)$ are already defined. Fix $b \in \{0,1\}^{n-1}$ and let $(t,p)$ be the first Poisson point strictly 
separating $p^{ij}_{b0}$ and $p^{ij}_{b1}$. In earlier notation, we have $(t,p) = (t(R^{ij}_{b0},R^{ij}_{b1}),p(R^{ij}_{b0},R^{ij}_{b1}))$. 

Let $T^{ij}_{b0}$ and $T^{ij}_{b1}$ be the components of $T\setminus \{p_k: (t_k,p_k) \in \cP, t_k \le t\}$ containing $p^{ij}_{b0}$ and $p^{ij}_{b1}$, respectively. In earlier notation,  
$T^{ij}_{b0}=T(p^{ij}_{b0},t)$ and $T^{ij}_{b1}=T(p^{ij}_{b1},t)$. 

Recall that $p'(T^{ij}_{b0})$ and $p'(T^{ij}_{b1})$ are the images of $p$ in $C$ corresponding to components $T^{ij}_{b0}$ and $T^{ij}_{b1}$. 
Then set $R^{ij}_{b00}=R^{ij}_{b0}$ and $R^{ij}_{b01}=p'(T^{ij}_{b0})$, and set 
$R^{ij}_{b10}=R^{ij}_{b1}$ and $R^{ij}_{b11}=p'(T^{ij}_{b1})$. 
Finally, set $p^{ij}_{b00}=p^{ij}_{b0}$, $p^{ij}_{b10}=p^{ij}_{b1}$ and $p^{ij}_{b01}=p=p^{ij}_{b11}$. 

Observe that with the above definitions, for all $b \in B\setminus \{\emptyset\}$, $R^{ij}_b$ is an image of $p^{ij}_b$ in $C$. However, it need not be the unique such image, and indeed it is typically not. It is worth recording that the points $(p^{ij}_b,|b|=n)$ are all elements of $\seg{u_i}{u_j}$ (with repetition). It follows from our recursive labelling convention that 
\begin{equation}\label{eq:dij_sum}
d_T(u_i,u_j) = \sum_{|b|=n} d_T(p^{ij}_{b0},p^{ij}_{b1})\, ,
\end{equation}
which will be useful in the next section.

We write $\rR = (\rR^{ij},i,j \in \N,i \ne j)$ for the collection of such routings, or $\rR(\rT,\cP,\ru)$ when we need such dependence to be explicit. 
\begin{prop}\label{prop:consistency}
For all distinct $i,j \in \N$,  $\rR^{ij}$ is a routing 
for $i$ and $j$. Furthermore, $\rR$ is a consistent collection of routings. 
\end{prop}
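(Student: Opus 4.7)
The plan is to prove both assertions by appealing to the recursive nature of the construction and the embedding of restricted cut-trees into $C$ developed in Section~\ref{sec:image}. First, to show that each $\rR^{ij}$ is a routing for $(i,j)$, the conditions $R^{ij}_0=i$, $R^{ij}_1=j$, and $R^{ij}_{b0}=R^{ij}_b$ are immediate from the explicit rules $R^{ij}_{b00}=R^{ij}_{b0}$ and $R^{ij}_{b10}=R^{ij}_{b1}$, so the real content is the signpost property: at step $b$, with $(t,p)$ the first cut strictly separating $p^{ij}_{b0}$ and $p^{ij}_{b1}$, I need $R^{ij}_{b01}\in C^{R^{ij}_{b0}}_{R^{ij}_{b0}\wedge R^{ij}_{b1}}$ and symmetrically for $R^{ij}_{b11}$. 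The plan is to use that the restricted cut-tree $\cC^\circ(\rT',\cP',\rv)$, with $\rT'=\{p\}\cup T^{ij}_{b0}$ and $\rv$ enumerating the elements of $\ru$ lying in $T^{ij}_{b0}$, embeds isometrically as a subtree of $(C,d,\rho)$, and an additive decomposition of $\ell$-lengths across time $t$ shows that the root of this embedded subtree is identified with $R^{ij}_{b0}\wedge R^{ij}_{b1}$; hence the whole embedded subtree---including $R^{ij}_{b01}=p'(T^{ij}_{b0})$---sits inside $C^{R^{ij}_{b0}}_{R^{ij}_{b0}\wedge R^{ij}_{b1}}$.

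For the consistency of $\rR$, Condition~(1) follows by symmetry: swapping $i$ and $j$ interchanges $(T^{ij}_0,T^{ij}_1)$ with $(T^{ji}_1,T^{ji}_0)$, so $r^{ij}_{0b}=r^{ji}_{1b}$ by a straightforward induction on $|b|$. Condition~(2) uses the observation that once the cut at step $b$ has separated $(p^{ij}_{b0},p^{ij}_{b1})$, all subsequent cuts affecting the sub-routing lie inside one of $T^{ij}_{b0}$, $T^{ij}_{b1}$; in particular, if $R^{ij}_{b0}=k\in\N$ and $R^{ij}_{b1}=\ell\in\N$, then $u_k$ and $u_\ell$ already inhabit the same restricted component and the arc $\seg{u_k}{u_\ell}$ lies entirely in it, so the cuts defining the sub-routing of $\rR^{ij}$ from $b$ onwards coincide with those defining $\rR^{k\ell}$ directly. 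For Condition~(3), the branchpoint $x=i\wedge j=k\wedge\ell$ has arrival time $\alpha(x)$ which almost surely identifies a single Poisson point $(t,p)$, and the subtree $C^i_{i\wedge j}=C^k_{k\wedge\ell}$ identifies a single component of $T\setminus\{p\}$, so $r^{ij}_{01}$ and $r^{k\ell}_{01}$ are both the image of this same $p$ in this same component and thus coincide.

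The main obstacle is the verification underlying the signpost property: showing that the restricted cut-tree embedding of Section~\ref{sec:image} sends its root to the MRCA $R^{ij}_{b0}\wedge R^{ij}_{b1}$ rather than to some other point of $C$. This requires matching distances using the decomposition $\ell(u_k)=\ell(u_k,t)+\ell'(u_k)$ for $u_k\in T^{ij}_{b0}$ (where $\ell'$ denotes the analogue of $\ell$ computed in the time-shifted restricted process), together with the fact that the images of $\rv$ in the restricted cut-tree agree with their images in $(C,d,\rho)$. Once this identification is in place, the induction on $|b|$ passes cleanly to higher levels by re-applying the same argument within the restricted subtree, and the consistency conditions then follow as relatively direct consequences of the construction.
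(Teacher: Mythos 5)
Your proposal is correct and takes essentially the same approach as the paper, which dismisses the first assertion as ``by construction'' and handles consistency with a single observation that the routing step at index $b$ depends only on the pair $(R_{b0},R_{b1})$. You make this precise---in particular by using the restricted cut-tree embedding of Section~\ref{sec:image} and a decomposition of $\ell$-lengths across the cut time to show the embedded root coincides with $R^{ij}_{b0}\wedge R^{ij}_{b1}$, and by supplying the brief separate argument (matching the branchpoint $i\wedge j$ to a unique Poisson point and the subtree $C^i_{i\wedge j}$ to a unique component) required for consistency condition~(3), whose hypotheses concern coincidence of branchpoints and subtrees rather than of $R$-values and so are not literally covered by the paper's one-line remark.
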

\begin{proof}
The first statement is by construction. Consistency is immediate from the fact that, in the notation just preceding the proposition, for any $i',j',b'$ with $R^{i'j'}_{b'0}=R^{ij}_{b0}$ and $R^{i'j'}_{b'1}=R^{ij}_{b1}$ we will have $R^{i'j'}_{b'00}=R^{ij}_{b0}$, $R^{i'j'}_{b'10}=R^{ij}_{b1}$, $R^{i'j'}_{b'01}=p'(T^{ij}_{b0})$ and $R^{i'j'}_{b'11}=p'(T^{ij}_{b1})$. 
\end{proof}

We view the triple $(\cC,\N,\rR)$ as the image of $(\rT,\cP,\ru)$ under the cut-tree transformation. It should be understood as a random metric measure space with a countable infinity of marked points, in the sense discussed at the end of Section~\ref{sec:rtrees}. 
The marks are the points $\N$ together with the points $(R^{ij}_b,b \in B\setminus \{\emptyset\}, i,j \in \N,i \ne j)$. Since this is a countable collection, we may re-index it by the natural numbers according to some arbitrary but fixed rule. 
We will describe a convenient such rule in the course of proving Proposition~\ref{prop:jointlaw}. 

\section{The case of stable trees}\label{sec:stable_trees}
Throughout Section~\ref{sec:stable_trees}, we fix $\alpha \in (1,2]$ and let $\rT=(T,d_T,\mu)$ be an $\alpha$-stable tree. We further let $\rU=(U_i,i \in \N)$ be an i.i.d.\ sequence of samples from $\mu$, and let $\cP$ be a Poisson process on $T\times [0,\infty)$ with intensity measure $\Lambda \otimes \d t$, where $\Lambda$ is as defined in Section~\ref{sec:stable_def}. 
For $s \ge 0$, let $F(s) = (F_1(s), F_2(s), \ldots)$ be the sequence of $\mu$-masses of the connected components of $T \setminus \{p:(p,t) \in \cP, t \le s\}$, listed in decreasing order.  Then, as discussed in the introduction, $(F(s), s \ge 0)$ is a self-similar fragmentation process~\cite{AldPi,Mi_05}.  We observe, in particular, that defining $(F(s), s \ge 0)$ as above for any $\alpha \in (1,2]$, by Lemma 10 of \cite{AldPi} and Lemma 8(iii) of \cite{Mi_05}, for each $s \ge 0$, we have $\sum_{i \ge 1} F_i(s) = 1$ almost surely, so that the fragmentation process conserves mass.

We let $\cC=\cC(\rT,\cP,\rU)$ be the cut-tree, and as usual write $\cC=(C, d,\rho,\nu)$.  The following result, which is due to Bertoin and Miermont~\cite{BerMi} in the case $\alpha=2$ and to Dieuleveut~\cite{Dieuleveut} in the case $\alpha \in (1,2)$, states that the cut-tree of a stable tree is again a stable tree.

\begin{thm}\label{thm:bmd}
Let $U_0 \sim \mu$ be a random point of $T$ independent of the points in $\rU$. 
Then we have $(d_T(U_i,U_j), i,j \in \{0\} \cup \N) \eqdist (d(i,j),i,j \in \{\rho\} \cup \N)$.
\end{thm}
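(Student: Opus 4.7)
The plan is to invoke the identification of random $\R$-trees by their sampled distance matrices (Theorem $3\tfrac{1}{2}.5$ of Gromov, cited at the end of Section~\ref{sec:rrt}). Since the $U_i$'s are i.i.d.\ $\mu$-samples and (by Proposition~\ref{prop:empirical_measure}) the natural numbers $\N \subset C$ form an i.i.d.\ $\nu$-sample, the statement will follow from the finite-dimensional equality $(d_T(U_i,U_j))_{0\le i,j\le n}\eqdist (d(i,j))_{i,j\in \{\rho,1,\ldots,n\}}$ for every $n\ge 1$, which I would prove by induction on $n$ using the re-rooting and self-similarity properties of stable trees.

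For the base case $n=1$, re-rooting invariance (Theorem~\ref{thm:re-rootinvariance}) allows me to assume $U_0 = \rho$, so the statement reduces to $d_T(\rho,U_1) \eqdist \ell(U_1)$, where $\ell(U_1) = \int_0^\infty \mu(T(U_1,s))\,\d s$ is the mass-integrated first isolation time of $U_1$ under the Poisson fragmentation. Its distribution was identified by Aldous-Pitman~\cite{AldPi} for $\alpha=2$ and Miermont~\cite{Mi_05} for $\alpha\in(1,2)$ as the (scaled) Mittag-Leffler law, which is also the law of $d_T(\rho, U_1)$ for a $\mu$-sample $U_1$ in an $\alpha$-stable tree.

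For the inductive step, I would re-root at $U_0$ and consider the common branchpoint $b$ of $U_0,U_1,U_2$. By Theorem~\ref{T:cut_in_3} (and its obvious three-subtree analogue), each of $T^{U_0}_b,T^{U_1}_b,T^{U_2}_b$, after rescaling by $\mu(T_b^{U_i})^{1-1/\alpha}$, is an independent $\alpha$-stable tree, and conditionally the remaining samples $U_3,U_4,\ldots$ land i.i.d.\ among them with the appropriate probabilities. Standard Poisson thinning yields independent, correctly rescaled cutting processes on each subtree. On the cut-tree side, the first Poisson cut separating any two of $U_0,U_1,U_2$ lies strictly above $b$ in exactly one of the three branches, and the construction of Section~\ref{sec:cut_tree_def} shows that the three subtrees of $C$ hanging off the corresponding first branchpoint are (up to the metric rescaling $d \mapsto \lambda^{1-1/\alpha} d$) the cut-trees of the three rescaled $\alpha$-stable trees, each inheriting its own samples. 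Applying the inductive hypothesis to each subtree (with a possibly smaller number of marked points per subtree) propagates the distributional equality upward.

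The main obstacle will be patching the inductive step together cleanly across the three pieces while tracking the scaling. In particular, one needs to verify that the depth from $\rho$ in $C$ of the first branchpoint separating the images of $U_0, U_1, U_2$ has the correct joint law with the masses $(\mu(T^{U_i}_b), i \in \{0,1,2\})$, and is conditionally independent of the three sub-fragmentations given these masses. This reduces to a direct computation: the depth equals $\ell(U_0, t^*) = \int_0^{t^*}\mu(T(U_0,s))\,\d s$, where $t^*$ is the first cut time in $\mathrm{span}(U_0,U_1,U_2)$, and the exponential clock for $t^*$ combined with mass-conservation of the fragmentation ($\sum_i F_i(s)=1$) lets one check that the induced distribution matches the one arising via self-similarity after rescaling. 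Once this single decomposition step is verified, the induction proceeds by repeated application and the distance-matrix identity follows.
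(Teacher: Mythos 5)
The paper does not actually prove Theorem~\ref{thm:bmd}: it is a cited result, due to Bertoin and Miermont~\cite{BerMi} for $\alpha=2$ and Dieuleveut~\cite{Dieuleveut} for $\alpha\in(1,2)$, and both of those references obtain it by discrete approximation (convergence of the cut-tree of a conditioned Galton--Watson tree jointly with the original tree), not by a direct continuum argument. So what you are proposing is a genuinely new, continuum-only proof. Unfortunately, the inductive step contains a real gap.

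The difficulty is that you decompose $\rT$ at the common branchpoint $b$ of $U_0,U_1,U_2$ using Theorem~\ref{T:cut_in_3}, but the recursive structure of the cut-tree is not governed by $b$ at all; it is governed by the first Poisson cut time on $\mathrm{span}(U_0,U_1,U_2)$. That cut occurs at a point $p\ne b$ on one of the three branches, and it splits $T$ into pieces that are \emph{not} $T_b^{U_0},T_b^{U_1},T_b^{U_2}$: one piece contains $b$ together with two of the three spines (and hence two of the sampled points), and the other piece contains the remaining one. Correspondingly, the branchpoint $1\wedge 2$ of $C$ has only two subtrees (in the spanning sense), not three, so there is no way to match it to the three-piece decomposition of $T$ at $b$. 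Moreover, the branch lengths in $C$ are the integrated masses $\ell(U_i,s)=\int_0^s\mu(T(U_i,r))\,\d r$, and before the first separation the evolving component $T(U_i,r)$ contains parts of all three of the subtrees $T_b^{U_j}$. Running the thinned cut process independently inside each $T_b^{U_j}$ does not reproduce these masses, so the ``Standard Poisson thinning yields independent, correctly rescaled cutting processes on each subtree'' step does not give back the cut-tree edge lengths.

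A repair along these lines would have to decompose at the first cut rather than at $b$: use Miermont's splitting result (Lemma~9 of~\cite{Mi_05}, as in the proof of Proposition~\ref{prop:jointlaw}), which says that after removing the first Poisson cut the components, with the cut point re-attached, are conditionally independent stable trees and the cut point is a uniform random point of each. One then has to track the joint law of the depth $\ell(U_1,t(U_1,U_2))$ of the first branchpoint in $C$ with the masses of the two pieces, and recurse. This is substantially more delicate than your sketch because the number of samples surviving in each piece is random, and in any case it is not the route the cited proofs take. Two smaller points: the reduction to finite-dimensional distributions is by Kolmogorov extension, not by Gromov's theorem (which goes the other way, from distance-matrix equality to equality of tree laws); and Proposition~\ref{prop:empirical_measure} only says $\nu$ is the empirical measure of $\N$, which does not by itself make $\N$ an i.i.d.\ $\nu$-sample --- that is a consequence of the theorem you are trying to prove.
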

In particular, Theorem~\ref{thm:bmd} implies that the assumptions of Proposition~\ref{prop:build_cut} hold, so $\cC$ is well-defined. The theorem then implies that $\cC$ has the same distribution as $(T, d_T, U_0, \mu)$.

Since $\cC$ is an $\alpha$-stable tree, it is compact. In $\rT$, the points $(U_i,i\in \{0\} \cup \N)$ are i.i.d samples from the mass measure. Since, by Proposition~\ref{prop:empirical_measure}, $\nu$ is the empirical measure of $\N$ in $C$, it follows that $(\cC,\{\rho\} \cup \N)$ is distributed as an $\alpha$-stable tree together with a sequence of i.i.d.\ samples from $\nu$. In particular, $\N$ is dense in $C$.

We further let $\rR=(\rR^{ij},i,j \in \N,i \ne j)$ be the collection of routings described in Section~\ref{sec:rout_cut}. By Proposition~\ref{prop:consistency}, $\rR$ is consistent. 

\subsection{Routings in stable trees}

\begin{figure}
\centering
		\includegraphics[width=0.5\textwidth]{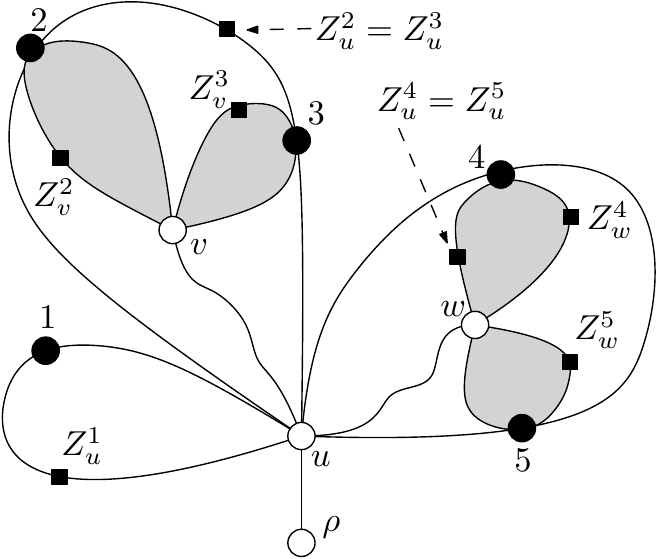}
		\caption{The  routing variables $Z_x^y$ for $x \in \{u,v,w\}$ and $y \in \{1,\ldots,5\}$. The redundancy in the notation is witnessed by the fact that $Z_u^2=Z_u^3$ and $Z_u^4=Z_u^5$.}
	\label{fig:dec2}
\end{figure}

For each $x \in \br(\cC)$ and $y \in C_x$, let $Z_x^y$ be the common value of all the random variables $R^{ij}_{01}$ for which $C^i_{i \wedge j}=C_x^y$, where as before $C_x^y$ is the subtree above $x$ containing $y$. 
We emphasize that there is redundancy in our notation for the set of routing variables $\rZ=\{Z_x^y: x \in \br(\cC),y \in C_x\}$ since there are multiple ways of specifying the same tree $C_x^y$. However, each random variable appears only once; see Figure~\ref{fig:dec2}.
\begin{prop}\label{prop:jointlaw}
The law of $(\cC,\N,\rZ)$ is as follows. 
\begin{enumerate}
\item $\cC$ is a stable tree endowed with its mass measure. 
\item The elements of $\{\rho\} \cup \N$ are i.i.d.\ with law $\nu$.
\item All elements of $\{\rho\} \cup \N$ and $\rZ$ are independent. 
\item For each $x \in \br(\cC)$ and $y \in C_x$, $Z_x^y \sim \nu_{C_x^y}$.
\end{enumerate}
\end{prop}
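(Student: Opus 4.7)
Plan. Parts (1) and (2) are immediate consequences of Theorem~\ref{thm:bmd} and the remarks immediately preceding the statement of the proposition: that theorem identifies $(\cC,\{\rho\}\cup\N)$ in law with an $\alpha$-stable tree equipped with an i.i.d.\ sequence of samples from its mass measure $\nu$. So the real content is in parts (3) and (4).

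For those, the key observation is that, by the construction in Section~\ref{sec:rout_cut}, each routing variable $Z_x^y$ equals the common value of $R^{ij}_{01}$ over all $i,j \in \N$ with $i \wedge j = x$ and $C_x^i = C_x^y$; and this is precisely the image $p'(T_0^{ij})$ (in the sense of Section~\ref{sec:image}) of the first Poisson cut separating $U_i$ and $U_j$, viewed from the component containing $U_i$. The plan is to identify the law of each such image by an iterative decomposition of $\rT$ along its cuts, driven by the self-similarity in Theorem~\ref{T:cut_in_3}.

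I would begin with the topmost branchpoint. Let $(p_*,t_*)$ be the first Poisson point that strictly separates two of the samples $(U_i)$, let $T_0,T_1$ be the two components of $T\setminus\{p_*\}$ containing the separated samples, and let $x_1 \in \br(\cC)$ be the branchpoint of $\cC$ created by this cut. By Theorem~\ref{T:cut_in_3} (applied by realising $p_*$ as the common branchpoint of three i.i.d.\ mass-samples, two of which lie on opposite sides of $p_*$, plus a suitable limiting argument), the rescaled measured trees $(T_j,\mu(T_j)^{-1+1/\alpha} d_T,\mu_{T_j})$ for $j \in \{0,1\}$ are independent $\alpha$-stable trees, independent of $(\mu(T_0),\mu(T_1))$, and conditionally on them the samples inside each, together with $p_*$, are i.i.d.\ draws from the respective rescaled mass measures. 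Standard Poisson splitting identifies the restrictions of $\cP$ to $T_0\times(t_*,\infty)$ and $T_1\times(t_*,\infty)$ as independent Poisson processes with the correct intensities to drive the cut-tree construction on each rescaled subtree. Applying Theorem~\ref{thm:bmd} to each side, the two subtrees $\cC^0,\cC^1$ of $\cC$ above $x_1$ are independent $\alpha$-stable trees equipped with their mass measures, and the samples inside them (including the images of $p_*$) are i.i.d.\ from these measures. Since $Z_{x_1}^{y_j}$ is by construction exactly the image of $p_*$ in $\cC^j$, it has law $\nu_{C^j}$ and is independent of the remaining samples in $\N$ and of $\cC^j$ itself.

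Iterating this decomposition downward through all branchpoints of $\cC$ yields the joint law asserted in (3) and (4): each sub-cut-tree produced is again an $\alpha$-stable tree equipped with i.i.d.\ mass-samples, so the hypotheses of the recursion continue to hold at every level. The main technical obstacle is the bookkeeping required to maintain mutual independence across many levels of recursion; this relies on the independence clause of Theorem~\ref{T:cut_in_3}(1), on Poisson splitting, and on checking that the image constructions of Section~\ref{sec:image} commute appropriately with the nested restrictions. Density of $\N$ in $C$ (supplied after Theorem~\ref{thm:bmd}) then ensures that every branchpoint of $\cC$ is almost surely of the form $i\wedge j$ for some $i,j \in \N$, so the iteration exhausts all of $\rZ$.
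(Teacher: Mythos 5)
Your high-level strategy---identify parts (1)--(2) with Theorem~\ref{thm:bmd}, recognize $Z_x^y$ as the image of a Poisson cut point, then set up a recursion down the branchpoints driven by self-similarity---is the same as the paper's. But there are two concrete gaps in the execution.

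First, your decomposition at the topmost branchpoint is binary: you take the first Poisson point $p_*$ separating two samples and look only at ``the two components $T_0,T_1$ of $T\setminus\{p_*\}$ containing the separated samples.'' For $\alpha\in(1,2)$ branchpoints of the stable tree have infinite degree, so $T\setminus\{p_*\}$ has infinitely many components and the first cut may simultaneously scatter the finite set of samples under consideration into \emph{more than two} of them. Your two subtrees $\cC^0,\cC^1$ then do not account for all the samples or all the routing variables at that branchpoint, and the iteration cannot close. The paper fixes $k$ and works with all $\ell$ subtrees $\rC_b(1),\dots,\rC_b(\ell)$ (with $2\le\ell\le k$) above the branchpoint nearest to $\rho$ in $\bigcup_{i\in[k]}\seg{\rho}{i}$, treating the $\ell$-fold split as the inductive step.

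Second, your justification that the separated subtrees are rescaled stable trees and that $p_*$ is distributed as a uniform mass-sample inside each of them rests on ``Theorem~\ref{T:cut_in_3} applied by realising $p_*$ as the common branchpoint of three i.i.d.\ mass-samples \ldots plus a suitable limiting argument.'' This is not enough: $p_*$ is a Poisson cut point, not the branchpoint of uniform samples, and bridging the two is exactly the kind of claim that requires proof. The paper avoids the issue by citing directly the results it needs---Miermont's Lemma 9 from \cite{Mi_05} for the fact that $\rT^*(U_{n(i,1)})$ is a stable tree of the appropriate mass, and the re-rooting invariance of Theorem~\ref{thm:re-rootinvariance} for the fact that $p$ is a $\mu_{T^*(U_{n(i,1)})}$-sample. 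You should do the same. Finally, the recursion needs a precise inductive hypothesis, because after one level $Z_b^{n(i,1)}$ changes role from a routing variable to an ordinary i.i.d.\ sample in the subtree; the paper formulates the induction on finite subsets $[k]\subset\N$ to track this. You flag the ``bookkeeping'' as an obstacle but do not resolve it, and it is not a cosmetic issue---without the induction formulated at the level of finite subsets, the claimed joint independence in (3) is not actually established.
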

\begin{proof}
The first two statements are contained in Theorem~\ref{thm:bmd}. 
Next, for $S \subset \N$ write $Z(S)=\{Z_{i \wedge j}^i: i,j \in S, i \ne j\}$ 
To prove the third and fourth statements, it suffices to verify that triple $(\cC,\N,Z(S)))$ has the appropriate law for all finite subsets $S$ of $\N$. In doing so we may assume without loss of generality that $S=[k]$ for some $k \ge 1$. 

We argue by induction on $k$, but before stating our induction hypothesis it is useful to introduce a small amount of notation. For $k \ge 2$, list the elements of $Z([k])$ without repetition as $Z_{x(1)}^{y(1)},\ldots,Z_{x(m)}^{y(m)}$. (So, for example, in Figure~\ref{fig:dec2}, with $k=5$ we have $m=7$ and we may take $(x(1),y(1))=(u,1)$, $(x(2),y(2))=(u,2)$, $(x(3),y(3))=(u,4)$, $(x(4),y(4))=(v,2)$, $(x(5),y(5))=(v,3)$, $(x(6),y(6))=(w,4)$ and $(x(7),y(7))=(w,5)$.) 
We omit the dependence on $k$ from our notation. 

We will show by induction that given $(\cC,\{\rho\} \cup \N)$, for all $1 \le i \le m$, $Z_{x(i)}^{y(i)}$ has law $\nu_{C_{x(i)}^{y(i)}}$ and, moreover, the random variables $Z_{x(i)}^{y(i)}$ are independent. This identifies the law of the $Z_x^y$. It also shows that the only dependence between the $Z_x^y$ and $\{\rho\} \cup \N$ is via the labelling of subtrees, and thus establishes (3) and (4). 

The case $k=1$ is immediate as $Z([1])$ is empty, and the joint law of the points in $\{\rho\} \cup \N$ is given by Theorem~\ref{thm:bmd}. We hereafter assume $k \ge 2$. 

Let $b$ be the nearest branchpoint to $\rho$ in $\bigcup_{i \in [k]} \seg{\rho}{i}$. List the elements of $\{\rC_b^i,i \in [k]\}$ without repetition as $\rC_b(1),\ldots,\rC_b(\ell)$, with $\rC_b(j)=(C_b(j),d,b)$. Note that $2 \le \ell \le k$, so that for each $1 \le j \le \ell$ the subtree $\rC_b(j)$ contains $k_j \le k-1$ elements of $[k]$. We list the points of $\N$ lying in $C_b(j)$  in increasing order as $\rN_j=(n(j,m),m \ge 1)$. Observe that the sets $\rN_j \cap [k]$, with $1 \le j \le \ell$, partition $[k]$. 

There is a unique point $(p,t)$ of $\cP$ which first strictly separates $U_{n(1,1)}$ from $U_{n(2,1)}$ in $T$. In the notation of Section~\ref{sec:cut_tree_def}, this is the point $(p,t)=(p(U_{n(1,1)},U_{n(2,1)}),t(U_{n(1,1)},U_{n(2,1)}))$. Note that this is also the first separator of any $U_{n(i,1)}$ and $U_{n(j,1)}$ for distinct $i$ and $j$. 
 
Recall that $T(U_{n(i,1)},t)$ is the connected component of $T\setminus \{p_j:t_j \le t\}$ containing $U_{n(i,1)}$. Since this does not contain the point $p$ we let 
$T^*(U_{n(i,1)}) = T(U_{n(i,1)},t) \cup \{p\}$ and $\rT^*(U_{n(i,1)})$ be the subtree of $\rT$ induced by $T^*(U_{n(i,1)})$. 

In~\cite{Mi_05}, Miermont showed that $\rT^*(U_{n(i,1)})$ is a stable tree of mass $\mu(T^*(U_{n(i,1)}))$; see, in particular, the proof of his Lemma 9. Furthermore, writing $\cP^*(U_{n(i,1)}) = \{(p_j,t_j-t): p_j \in T(U_{n(i,1)},t)\}$, then $\cP^*(U_{n(i,1)})$ is a Poisson process with intensity measure $\lambda|_{T^*(U_{n(i,1)})}\otimes \d t$. The pairs $((\rT^*(U_{n(i,1)}),\cP^*(U_{n(i,1)})),i \in [\ell])$ only depend on each other via the vector of masses $(\mu(T^*(U_{n(i,1)})),i \in [\ell])$. Finally, for each $i \in [\ell]$, the points $\rU_i = (U_j: j \in \rN_i)$ are precisely those $U_j$ lying in $T^*(U_{n(i,1)}))$, and are i.i.d.\ with law $\mu_{T^*(U_{n(i,1)})}$. By Theorem~\ref{thm:re-rootinvariance}, $p$ also has law $\mu_{T^*(U_{n(i,1)})}$.

For each $1 \le i \le \ell$, the tree $\rC_b(i)$, which is rooted at $b$, is obtained as the cut-tree of $(\rT^*(U_{n(i,1)}),\cP^*(U_{n(i,1)}),\rU_i)$. It follows that $\rC_b(i)$ is a stable tree of mass $\nu(C_b(i))=\mu(T^*(U_{n(i,1)}))$. The facts from the preceding paragraph also imply that the trees $(\rC_b^i,i \in [\ell])$ are conditionally independent given their masses.
Furthermore, the elements of $\rN_i$ are precisely the images in $C_b(i)$ of the points in $\rU_i$, and $Z_b^{n(i,1)}$ is the image of $p$. 
These facts are special cases of the observation about cut-trees of subtrees described at the end of Section~\ref{sec:image}. Finally, note that we may also view $\rC_b(i)$ as the cut-tree of 
$(\rT^*(U_{n(i,1)}),\cP^*(U_{n(i,1)}),\{p\} \cup \rU_i)$, as described near the start of Section~\ref{sec:image}. 

Now apply the inductive hypothesis to 
\[
\left(\rC_b(i),\left(b,Z_b^{n(i,1)},n(i,1),n(i,2),\ldots\right),Z(\rN_i \cap [k]) \setminus \{Z_b^{n(i,1)}\} \right)\, ,
\] 
for each $i \in [\ell]$. This is permitted since $b$, $Z_b^{n(i,1)}$, and the elements of $\rN_i$ all have the correct laws, and since $|N_i \cap [k]|=k_i < k$. 

We obtain by induction that $Z_b^{n(1,1)},\ldots,Z_b^{n(\ell,1)}$ are independent, with $Z_b^{n(i,1)} \sim \nu_{C_b(i)}$. We emphasize that this is because $Z_b^{n(i,1)}$ is the image of a uniform point in $T^*(U_{n(i,1)})$; in the induction $Z_b^{n(i,1)}$ is no longer playing the role of a routing variable. 

We further obtain that for each $1 \le i \le \ell$, the elements of $Z(\rN_i \cap [k]) \setminus \{Z_b^{(n(i,1))}\}$ have the correct joint law and are independent of $Z_b^{n(i,1)}$. Here we are again using the difference in roles between $Z_b^{n(i,1)}$ and the other elements of $Z(\rN_i \cap [k])$. Finally, the subtrees $C_b(i)$ are conditionally independent given their masses, which yields the requisite independence of the collections $Z(\rN_i \cap [k])$ for $1 \le i \le \ell$. 
Since 
\[
Z([k]) = \bigcup_{i=1}^{\ell} Z(\rN_i \cap [k])\, ,
\]
this fully identifies the joint law of the random variables $Z_{x(1)}^{y(1)},\ldots,Z_{x(m)}^{y(m)}$ which comprise $Z([k])$, and so completes the proof. 
\end{proof}
Note that since $\rR$ is consistent, it is completely determined by $\rZ$. The preceding proposition therefore fully specifies the joint law of the triple $(\cC,\N,\rR)$. 

\subsection{Distributional identities for stable trees}
\label{sec:dist_ident}
We recall the definitions of some distributions that play a role in the sequel. Write $\Delta_n=\{(x_1,\ldots,x_n) \in \R_+^n:\sum_{i=1}^n x_i=1\}$. A $\Delta_n$-valued random vector $\bX=(X_1,\ldots,X_n)$ has the {\em Dirichlet distribution} $\dir(\theta_1,\ldots,\theta_n)$ if its density with respect to Lebesgue measure on $\Delta_n$ is 
\[
\frac{\Gamma(\sum_{i=1}^n \theta_i)}{\prod_{i=1}^n \Gamma(\theta_i)} \prod_{j=1}^n x_i^{\theta_i-1}. 
\]

A non-negative random variable $Y$ has the {\em Mittag-Leffler distribution with parameter $\beta \in (0,1)$}, denoted $\ml(\beta)$, if it satisfies 
\begin{equation}\label{eq:ml}
\E{Y^p} = \frac{\Gamma(p+1)}{\Gamma(p\beta+1)}\, ,
\end{equation}
for $p \ge -1$. This equation determines the law of $Y$ (see \citep[p.12]{pitman06} and \citep[p.391]{bingham}).
Write $g_{\beta}$ for the density of $\ml(\beta)$ with respect to Lebesgue measure. 
Write $\widehat{\ml}(\beta)$ for the size-biased distribution, which has density 
\[
\hat{g}_{\beta}(r) := \Gamma(\beta+1) r g_{\beta}(r), \quad r \ge 0.
\]
\medskip

Our proofs exploit the following characterization of the size-biased Mittag-Leffler distribution, 
\begin{lem} \label{T:ML_characterization} 
Fix $\beta \in [1/2,1)$, and let $M$, $M_1$ and $M_2$ be independent and identically distributed non-negative random variables with 
\[
\E{M}=\frac{2\Gamma(\beta+1)}{\Gamma(2\beta+1)}\, .
\]
Let $(X_1, X_2, X_3) \sim \dir(\beta,\beta,1-\beta)$ be independent of $M_1$ and $M_2$. 
Then $M$ solves the recursive distributional equation
\begin{gather} \label{E:RDE}
X_1^{\beta} M_1 + X_2^{\beta} M_2 \eqd M,
\end{gather}
if and only if $M \sim \widehat{\ml}(\beta)$. 
\end{lem}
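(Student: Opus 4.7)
The proof splits into sufficiency (that $\widehat{\ml}(\beta)$ satisfies \eqref{E:RDE}) and necessity (that any solution with the specified first moment must be $\widehat{\ml}(\beta)$). My plan is to treat both directions via moment calculations.

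For sufficiency, first extract the moments of $M \sim \widehat{\ml}(\beta)$ from \eqref{eq:ml} and the size-biasing relation, obtaining $\E{M^n} = \Gamma(\beta+1)(n+1)!/\Gamma(\beta(n+1)+1)$. Then expand $\E{(X_1^\beta M_1 + X_2^\beta M_2)^n}$ by the binomial theorem, factor using independence of the $X_i$'s from $M_1,M_2$, and apply the standard Dirichlet mixed-moment formula
\[
\E{X_1^{k\beta} X_2^{(n-k)\beta}} = \frac{\Gamma(1+\beta)\,\Gamma(\beta(k+1))\,\Gamma(\beta(n-k+1))}{\Gamma(\beta)^2\,\Gamma(\beta(n+1)+1)}.
\]
Inserting the expressions for $\E{M_1^k}\E{M_2^{n-k}}$ and simplifying via $\Gamma(\beta+1)=\beta\Gamma(\beta)$ and $\Gamma(\beta(k+1)+1)=\beta(k+1)\Gamma(\beta(k+1))$, each of the $n+1$ binomial summands collapses to the $k$-independent value $n!\,\Gamma(\beta+1)/\Gamma(\beta(n+1)+1)$, and summing recovers $\E{M^n}$ exactly.

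For necessity, let $M$ be a nonnegative solution of \eqref{E:RDE} with $m_n := \E{M^n}$. The same binomial expansion yields
\[
m_n = \sum_{k=0}^{n}\binom{n}{k}\E{X_1^{k\beta}X_2^{(n-k)\beta}}m_k\,m_{n-k}.
\]
The marginal of $X_1$ under $\dir(\beta,\beta,1-\beta)$ is $\mathrm{Beta}(\beta,1)$, so $\E{X_1^{n\beta}} = 1/(n+1)$; the $k\in\{0,n\}$ contributions combine to $\tfrac{2}{n+1}m_n$. For $n\ge 2$ we may rearrange to obtain
\[
\frac{n-1}{n+1}m_n = \sum_{k=1}^{n-1}\binom{n}{k}\E{X_1^{k\beta}X_2^{(n-k)\beta}}m_k\,m_{n-k},
\]
which recursively determines $m_n$ from $m_1, \dots, m_{n-1}$. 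The hypothesized value of $m_1$ together with $m_0=1$ thus pins down every moment. Since the moments of $\widehat{\ml}(\beta)$ satisfy Carleman's condition (they grow like $(n!)^{1-\beta}$), the moment problem is determinate and we conclude $M \sim \widehat{\ml}(\beta)$.

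The main obstacle is that the recurrence above requires $m_n<\infty$, yet \emph{a priori} only $m_1<\infty$ is known. I would address this by working with the Laplace transform $\phi(q)=\E{e^{-qM}}$, which unconditionally satisfies the fixed-point equation $\phi(q)=\E{\phi(qX_1^\beta)\phi(qX_2^\beta)}$, and bootstrapping $C^n$-regularity of $\phi$ at the origin by induction on $n$: the very same coefficient $\tfrac{n-1}{n+1}>0$ that permitted the moment inversion for $n \ge 2$ also yields the inductive step for finiteness of $\phi^{(n)}(0^+)$, whence $m_n$. An alternative---potentially cleaner---route would be to bypass moment finiteness altogether by proving uniqueness of RDE fixed points via a contraction argument in a suitable Wasserstein-type metric on probability measures with prescribed mean.
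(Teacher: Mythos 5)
Your proof of the sufficiency direction (that $\widehat{\ml}(\beta)$ solves \eqref{E:RDE}) is correct, and it takes a genuinely different route from the paper. The paper realises the RDE as an almost sure identity on an $\alpha$-stable tree: with $M = \alpha\, d(x,y)$, $M_i$ the rescaled half-spine lengths, and $X_i$ the subtree masses, the equality $M = X_1^\beta M_1 + X_2^\beta M_2$ holds pointwise, and Theorems~\ref{T:cut_in_3} and~\ref{T:cut_in_3b} supply the required laws and independences. Your moment calculation is self-contained, verifies the identity for every $\beta\in(0,1)$, and collapses nicely via $\Gamma(\beta(k+1)+1)=\beta(k+1)\Gamma(\beta(k+1))$; this is a clean alternative, at the cost of losing the structural connection to the stable tree that motivates the lemma in the first place.

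The necessity direction, however, has a genuine gap which you correctly identify but do not close, and your two proposed repairs do not obviously work. The moment recursion
\[
\Big(1-\tfrac{2}{n+1}\Big)m_n = \sum_{k=1}^{n-1}\binom{n}{k}\E{X_1^{k\beta}X_2^{(n-k)\beta}}m_k m_{n-k}
\]
is vacuous if $m_n=\infty$: the equation $\infty=\tfrac{2}{n+1}\infty + S_n$ is consistent, so it gives no a priori finiteness. Your Laplace-transform bootstrap does not resolve this. Writing $f_n(q)=\E{M^n e^{-qM}}=(-1)^n\phi^{(n)}(q)$, the $n$-th derivative of $\phi(q)=\E{\phi(qX_1^\beta)\phi(qX_2^\beta)}$ expresses $f_n(q)$ in terms of $f_n(qX_i^\beta)$ with $X_i\le 1$, i.e.\ in terms of values of $f_n$ at arguments \emph{closer} to $0$; since $f_n$ is decreasing, $f_n(qX_i^\beta)\ge f_n(q)$, so the only uniform bound available is $f_n(qX_i^\beta)\le m_n$ itself. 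The ``same coefficient $(n-1)/(n+1)$'' argument therefore yields only $m_n\le\tfrac{2}{n+1}m_n + S_n$, which again does not exclude $m_n=\infty$. The Wasserstein alternative also stalls: a $W_1$ estimate gives non-expansion but not contraction, since $2\E{X_1^\beta}=1$ exactly, while a $W_2$ estimate gives contraction factor $2\E{X_1^{2\beta}}=2/3<1$ but needs the very second moment whose finiteness is in question; $\zeta_2$-type ideal metrics have the same defect. This is precisely the non-trivial content that the paper outsources to Durrett and Liggett's classification of fixed points of the smoothing transform: after computing $\nu(s)=\log 2 - \log(s+1)$ and checking $\nu(1)=0$, $\nu'(1)<0$, Theorem~2(a) of \cite{DurLig} delivers uniqueness up to scale among all finite-mean fixed points, with no need to establish higher moments in advance. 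To make your necessity argument complete you would either have to reprove the relevant portion of \cite{DurLig}, or find an independent a priori moment bound (e.g.\ a direct tail estimate for fixed points), neither of which is sketched.

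Two smaller remarks. First, Carleman's condition does hold as you say: $m_n$ grows like $(n!)^{1-\beta}$ up to geometric factors, so $\sum_n m_{2n}^{-1/(2n)}\asymp\sum_n n^{-(1-\beta)}=\infty$ for every $\beta\in(0,1)$; this part is fine. Second, note that the lemma is applied in the paper with $\beta = 1-1/\alpha\in(0,1/2]$ for $\alpha\in(1,2]$, which is not the range $[1/2,1)$ stated in the lemma; fortunately both your sufficiency computation and the paper's Durrett--Liggett verification are valid for all $\beta\in(0,1)$, so neither proof is actually affected.
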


Let $x,y,z$ be independent points of $T$ with common law $\mu$, and let $b$ be the common branchpoint of $x,y,z$ (i.e. the unique element of $\seg{x}{y} \cap \seg{x}{z} \cap \seg{y}{z}$). Recall that $T^x_b$ is the subtree of $T$ consisting of all points $w$ with $b \not\in \oseg{x}{w}$.

\begin{thm}\label{T:cut_in_3b}
\begin{enumerate}
\item The random variable $\alpha \cdot d(x,y)$ is $\widehat{\ml}(1-\frac{1}{\alpha})$-distributed. 
\item The vector $(\mu(T_b^x),\mu(T_b^y),\mu(T \setminus (T_b^x \cup T_b^y)))$ is $\dir(1-\frac{1}{\alpha},1-\frac{1}{\alpha},\frac{1}{\alpha})$-distributed. 
\end{enumerate}
\end{thm}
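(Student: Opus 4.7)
The plan is to prove Part (2) first---the mass decomposition---and then use it together with the self-similarity of Theorem~\ref{T:cut_in_3} and the fixed-point identity in Lemma~\ref{T:ML_characterization} to obtain Part (1).

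For Part (2), set $\beta = 1 - 1/\alpha$. When $\alpha = 2$, the branchpoint $b$ is binary and $T \setminus \{b\}$ has exactly three components, one containing each of $x,y,z$; Aldous's classical three-leaf mass partition for the Brownian CRT gives $(\mu(T_b^x), \mu(T_b^y), \mu(T_b^z)) \sim \dir(1/2,1/2,1/2)$. When $\alpha \in (1,2)$, $b$ has infinite degree and, conditionally on $A(b)$, the sequence of subtree masses at $b$ is described by the jumps of a $(1/\alpha)$-stable subordinator stopped at first passage of $A(b)$; see \cite{DuqLG_05, Mi_05, HaasPitmanWinkel}. A size-biased-sampling computation---each of the three i.i.d.\ $\mu$-samples selects a subtree at $b$ with probability proportional to its mass---then shows that the two distinguished masses $\mu(T_b^x),\mu(T_b^y)$ have Dirichlet parameter $\beta$ each, while the pooled unselected mass $\mu(T\setminus(T_b^x\cup T_b^y))$ (which contains $z$ together with every other subtree at $b$) carries parameter $1/\alpha = 1-\beta$.

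For Part (1), set $D = d(x,y)$. Theorem~\ref{T:cut_in_3} says that $\rT^x$ and $\rT^y$ are independent $\alpha$-stable trees, independent of $(\mu(T_b^x),\mu(T_b^y))$, and that in each the two distinguished points form an i.i.d.\ pair under the mass measure. Since the metric in $\rT^x$ is $\mu(T_b^x)^{-\beta} d$,
\bags
d_T(x,b) = \mu(T_b^x)^\beta \widetilde{D}_1 \qquad \text{and} \qquad d_T(b,y) = \mu(T_b^y)^\beta \widetilde{D}_2,
\eags
where $\widetilde{D}_1,\widetilde{D}_2$ are i.i.d.\ copies of $D$ independent of the mass vector. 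Summing, multiplying by $\alpha$, and invoking Part (2) yields the recursive distributional equation
\bags
\alpha D \eqd X_1^\beta M_1 + X_2^\beta M_2
\eags
with $(X_1,X_2,X_3) \sim \dir(\beta,\beta,1-\beta)$ independent of i.i.d.\ copies $M_1,M_2 \eqd \alpha D$. This is precisely the RDE of Lemma~\ref{T:ML_characterization}. To conclude, the mean condition $\E{\alpha D} = 2\Gamma(\beta+1)/\Gamma(2\beta+1)$ must be verified; a direct computation of $\E{d(x,y)}$ from the $\alpha$-stable Lévy excursion encoding of $\rT$ yields the required closed form in terms of Gamma functions.

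The main obstacle is Part (2) for $\alpha \in (1,2)$: pinning down the pooled residual parameter to exactly $1/\alpha$ requires the explicit subordinator computation at the infinite-degree branchpoint, and is not immediate from the self-similarity of Theorem~\ref{T:cut_in_3}. A secondary technical issue is that the parameter $\beta = 1 - 1/\alpha \in (0,1/2]$ lies outside the hypothesis $\beta \in [1/2,1)$ of Lemma~\ref{T:ML_characterization}; we will either extend that lemma to $\beta \in (0,1)$ (the fixed-point argument appears robust to this extension since the key identity $\E{X_1^\beta} = 1/2$ holds for all $\beta > 0$ when $X_1 \sim \mathrm{Beta}(\beta,1)$), or, alternatively, verify all integer moments of $\alpha D$ directly from the RDE and appeal to the moment-determinacy of $\widehat{\ml}(\beta)$ noted in Section~\ref{sec:dist_ident}.
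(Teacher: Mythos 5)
Your plan is a genuinely different route from the paper's: the paper simply cites Theorem 3.3.3 of Duquesne--Le Gall for Part (1), and Corollary 10 of Haas--Pitman--Winkel together with identity (4.1) of Goldschmidt--Haas for Part (2). You instead re-derive Part (2) from the subordinator description of the subtree masses at an infinite-degree branchpoint, and then try to get Part (1) from Part (2), Theorem~\ref{T:cut_in_3}, and the RDE characterisation in Lemma~\ref{T:ML_characterization}.

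There is a circularity you have not flagged and which would sink the Part (1) argument as written. The paper's proof of Lemma~\ref{T:ML_characterization} \emph{invokes Theorem~\ref{T:cut_in_3b}}: the ``if'' direction (that $\widehat{\ml}(\beta)$ is a solution of \eqref{E:RDE}) is established precisely by appealing to Theorems~\ref{T:cut_in_3} and~\ref{T:cut_in_3b}, and the ``only if'' direction is then Durrett--Liggett uniqueness plus the fact that ``we have already identified'' $\widehat{\ml}(\beta)$ as a solution. So using Lemma~\ref{T:ML_characterization} as a black box to prove Theorem~\ref{T:cut_in_3b}(1) is circular. Durrett--Liggett alone tells you that your RDE-solving random variable $\alpha D$ is unique in law up to scaling, but to \emph{identify} that law as $\widehat{\ml}(1-1/\alpha)$ you still need an independent verification that $\widehat{\ml}(\beta)$ solves \eqref{E:RDE}. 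Your fallback --- compute all integer moments of $\alpha D$ from the RDE and the mean, and match against the known moment formula \eqref{eq:ml} for $\widehat{\ml}(\beta)$, which is moment-determined --- does break the circularity, but that is then the real content of the argument and needs to be carried out; the appeal to Lemma~\ref{T:ML_characterization} should be dropped or re-founded.

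Two further remarks. First, the ``direct computation of $\E{d(x,y)}$ from the L\'evy excursion encoding'' is not a light step: you need the exact closed form $\E{\alpha d(x,y)} = 2\Gamma(2-1/\alpha)/\Gamma(3-2/\alpha)$, which is essentially equivalent to knowing the first moment of the distribution you are trying to derive; make sure you are not implicitly using the conclusion. Second, you are right that the parameter restriction $\beta\in[1/2,1)$ in Lemma~\ref{T:ML_characterization} is inconsistent with $\beta=1-1/\alpha\in(0,1/2]$ for $\alpha\in(1,2]$; the intended range in the lemma is evidently $\beta\in(0,1/2]$, and the Durrett--Liggett verification (the function $\nu(s)=\log 2 - \log(s+1)$ is $\alpha$-independent) goes through without change. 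Your subordinator/size-biased-sampling derivation of Part (2) for $\alpha\in(1,2)$ is plausible and essentially re-proves \citep[(4.1)]{GoldHa}, but since you rely on the precise residual parameter $1/\alpha$, you should write that computation out rather than gesture at it.
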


\begin{proof}
For the first assertion, see Theorem 3.3.3 of \cite{DuqLG_02}. The second follows from Corollary 10 of \cite{HaasPitmanWinkel} after using distributional identities established in \cite{GoldHa}, and is explicitly noted as \citep[(4.1)]{GoldHa}.  When $\alpha=2$, the law of the distance between two uniform points was earlier proved to follow the Rayleigh distribution, by Aldous \cite{CRT3}. This is in agreement with the current result, up to a choice of normalization, since if $Z$ is standard Rayleigh then the law of $\sqrt{2}Z$ is $\widehat{\ml}(1/2)$; see Section 1.1 of \cite{GoldHa}.
\end{proof}

We can now proceed to the proof of Lemma~\ref{T:ML_characterization}.

\begin{proof}[Proof of Lemma~\ref{T:ML_characterization}] Write $\beta=1-1/\alpha$; then $\alpha \in (1,2]$. 
Consider again the $\alpha$-stable tree $\rT$ with $x,y,z$ independent points sampled according to the law $\mu$.  Let $M = \alpha \cdot d(x,y)$, $M_1 = \alpha \mu(T_b^x)^{-\beta} \cdot d(x,b)$, $M_2 = \alpha \mu(T_b^y)^{-\beta}\cdot d(y,b)$, $X_1 = \mu(T_b^x)$, $X_2 = \mu(T_b^y)$ and $X_3 = \mu(T_b^{z})$.  Then $M = X_1^{\beta} M_1 + X_2^{\beta} M_2$.  Moreover, by Theorems~\ref{T:cut_in_3} and \ref{T:cut_in_3b}, $M$ is $\widehat{\ml}(\beta)$-distributed, and $M_1, M_2$ are independent $\widehat{\ml}(\beta)$ random variables, independent of $(X_1,X_2,X_3) \sim \dir(\beta,\beta,1-\beta)$.  It follows that $\widehat{\ml}(\beta)$ satisfies the RDE (\ref{E:RDE}).

For the converse, we use that the left-hand side of \eqref{E:RDE} is an instance of the \emph{smoothing transform} applied to the law of $M$.  The fixed points of the smoothing transform have been completely characterized by Durrett and Liggett \cite{DurLig}.  Indeed, the space of fixed points is determined by the analytical properties of the function $\nu: \R_+ \to \R$ defined (in our setting) by
\begin{gather*}
\nu(s) = \log \left( \E{X_1^{\beta s} \ind_{X_1 > 0} + X_2^{\beta s} \ind_{X_2 > 0} } \right),
\end{gather*}
for $s \geq 0$.  Since $X_1$ and $X_2$ are marginally both distributed as $\mathrm{Beta}(\beta,1)$, it is easily checked that $X_1^{\beta} \eqd X_2^{\beta} \eqd U$, where $U$ is uniform on $[0,1]$.  Hence,
\begin{gather*}
\nu(s) = \log\left(2 \E{U^s}\right) = \log(2) - \log(s + 1), \quad s \geq 0,
\end{gather*}
for any $\alpha \in (1,2]$.  Observe that $\nu$ has its unique zero in $(0,1]$ at $s = 1$, and that $\nu'(1) = -1/2 < 0$.  In this case, \cite[Theorem 2(a)]{DurLig} entails that \eqref{E:RDE} has a unique distributional solution, up to multiplication by a non-negative constant; we have already identified this solution as the $\widehat{\ml}(\beta)$ distribution. 
\end{proof}

\subsection{Reconstructing the distance between a pair of points in $\rT$}
\label{sec:reconstruct}

Recall that $\rC=(C,d,\rho,\nu)$ is the cut-tree of $\rT$, that the points of $\N$ are i.i.d.\ with law $\nu$ and that $\rR$ is the collection of routings for the elements of $\N$. 
For the remainder of Section~\ref{sec:reconstruct} we fix distinct $i,j \in \N$ and recall that the routing for $i$ and $j$ is denoted $\rR^{ij}$.

Recall that for distinct nodes $y$ and $z$ of $C$, the subtree of $C$ above $z$ containing $y$ is denoted by $C^y_z$. Now fix $b \in B\setminus \{\emptyset\}$, and let $b'$ be the sibling of $b$ in $B$; so if $b=\hat{b}0$ then $b'=\hat{b}1$ and vice versa. 
Then let $M_b = \nu(C^{R_{b}}_{R_b \wedge R_{b'}})$. 
In Figure~\ref{fig:dec}, for example, $M_{00}$ is the mass of the shaded subtree labelled $00$, and likewise for $M_{01}$, $M_{10}$ and $M_{11}$.
It is crucial in what follows that, in the notation of Section~\ref{sec:rout_cut} we also have $M_b=\mu(T^{ij}_b)$ since $C^{R_{b}}_{R_b \wedge R_{b'}}$ is the cut-tree of $T^{ij}_b$. 
We also set $M_{\emptyset}=\nu(C)=1=\mu(T)$. 

\medskip
It is convenient to write 
\[
(\Delta_{b0},\Delta_{b1},1-\Delta_{b0}-\Delta_{b1}) = \frac{1}{M_b} \pran{M_{b0},M_{b1},M_b-M_{b0}-M_{b1}}. 
\]
We will repeatedly use that for all $b \in B$, 
\[
(\Delta_{b0},\Delta_{b1},1-\Delta_{b0}-\Delta_{b1}) \sim \dir\bigg(1-\frac{1}{\alpha},1-\frac{1}{\alpha},\frac{1}{\alpha}\bigg)\, ,
\]
and that the vectors $\{(\Delta_{b0},\Delta_{b1},1-\Delta_{b0}-\Delta_{b1}),b \in B\}$ are mutually independent; 
these properties follow from Theorems~\ref{T:cut_in_3} and \ref{T:cut_in_3b}.

Next, for $n \ge 0$, let 
\begin{equation}\label{eq:zzprime}
Y_n=Y_n(i,j) = 
\sum_{|b|=n} M_b^{1-1/\alpha}\, .
\end{equation}
\begin{prop}\label{prop:znlimit}
As $n \to \infty$, $Y_n \convas Y$ where the limit $Y = Y(i,j)$ satisfies 
\[
\frac{2\Gamma(2-\frac{1}{\alpha})}{\Gamma(3-\frac{2}{\alpha})}Y\sim\widehat{\ml}(1-1/\alpha).
\] 
\end{prop}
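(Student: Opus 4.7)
The plan is to recognize $(Y_n)_{n \ge 0}$ as a non-negative martingale, upgrade the almost-sure convergence to $L^1$-convergence via an $L^2$ bound (so that the limit inherits the correct mean), and finally invoke Lemma~\ref{T:ML_characterization} to pin down its law. For the martingale step, let $\mathcal{F}_n$ be the $\sigma$-algebra generated by $(M_b : |b| \le n)$. By the distributional description preceding \eqref{eq:zzprime}, conditional on $\mathcal{F}_n$ the splits $(\Delta_{b0},\Delta_{b1},1-\Delta_{b0}-\Delta_{b1})_{|b|=n}$ are mutually independent and $\dir(\beta,\beta,1/\alpha)$-distributed, with $\beta := 1-1/\alpha$. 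The marginal of $\Delta_{b0}$ is $\mathrm{Beta}(\beta,1)$, so $\E{\Delta_{b0}^\beta} = 1/2$ and $\E{\Delta_{b0}^{2\beta}} = 1/3$. A direct conditional expectation gives
\[
\E{Y_{n+1} \mid \mathcal{F}_n} \;=\; \sum_{|b|=n} M_b^\beta\,\bigl(\E{\Delta_{b0}^\beta} + \E{\Delta_{b1}^\beta}\bigr) \;=\; Y_n,
\]
so $Y_n \to Y$ almost surely by non-negative martingale convergence.

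For the $L^2$ bound, Theorem~\ref{T:cut_in_3}, applied at the first Poisson cut on $\oseg{u_i}{u_j}$ exactly as in the inductive step of Proposition~\ref{prop:jointlaw}, identifies the two rescaled subtrees as independent $\alpha$-stable trees in each of which the two relevant marked points (namely $u_i$ and the cut $p$ on one side, $u_j$ and $p$ on the other) are two independent uniform samples. Writing $\Delta_0,\Delta_1$ for the root-level splits $\Delta_{\emptyset 0},\Delta_{\emptyset 1}$, this yields
\[
Y_n \;=\; \Delta_0^\beta Y_{n-1}^{(0)} + \Delta_1^\beta Y_{n-1}^{(1)},
\]
where $Y_{n-1}^{(0)},Y_{n-1}^{(1)}$ are independent copies of $Y_{n-1}$, independent of $(\Delta_0,\Delta_1)$. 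Squaring and taking expectations, using $\E{Y_{n-1}} = 1$, gives $S_n = \tfrac{2}{3}S_{n-1} + 2\,\E{\Delta_0^\beta\Delta_1^\beta}$ for $S_n := \E{Y_n^2}$; since the geometric ratio $2/3$ is less than $1$ and the inhomogeneous term is finite, $\sup_n S_n < \infty$. Hence $(Y_n)$ is $L^2$-bounded and uniformly integrable, so $Y_n \to Y$ in $L^1$ and $\E{Y} = 1$. Passing to the a.s.\ limit in the above display produces the recursive distributional identity
\[
Y \;=\; \Delta_0^\beta Y^{(0)} + \Delta_1^\beta Y^{(1)}
\]
with $Y^{(0)}, Y^{(1)}$ i.i.d.\ copies of $Y$, independent of $(\Delta_0,\Delta_1) \sim \dir(\beta,\beta,1/\alpha)$. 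Setting $c := 2\Gamma(\beta+1)/\Gamma(2\beta+1) = 2\Gamma(2-1/\alpha)/\Gamma(3-2/\alpha)$, the variable $M := cY$ has mean $c$ and satisfies the same RDE driven by the independent copies $cY^{(0)}, cY^{(1)}$, so Lemma~\ref{T:ML_characterization} forces $M \sim \widehat{\ml}(\beta)$, which is the claim.

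The main obstacle is the uniform-integrability step: Fatou alone gives only $\E{Y} \le 1$, and without equality one cannot feed $Y$ into Lemma~\ref{T:ML_characterization}, whose hypothesis pins down the expectation. The $L^2$ computation succeeds because of the explicit numerical identity $2\,\E{\Delta_0^{2\beta}} = 2/3$, valid uniformly in $\alpha \in (1,2]$; were this quantity ever to exceed $1$, a more delicate tightness or truncation argument would be required. A secondary point requiring verification is that the subtree decomposition $Y_n = \Delta_0^\beta Y_{n-1}^{(0)} + \Delta_1^\beta Y_{n-1}^{(1)}$ really does produce genuinely independent copies of $Y_{n-1}$ on an independent Dirichlet branching structure; this is a direct application of the self-similarity statement Theorem~\ref{T:cut_in_3}, mirroring the inductive argument already used in Proposition~\ref{prop:jointlaw}.
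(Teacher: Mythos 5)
Your proof is correct, but it takes a genuinely different route from the paper. Both arguments begin identically with the martingale step (and your computation $\E{\Delta_{b0}^\beta}=1/2$ is exactly the paper's observation that $\Delta_{b0}^{1-1/\alpha}\sim\mathrm{U}[0,1]$). The two arguments diverge in how the limit's law is pinned down.

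The paper avoids any uniform-integrability argument for $(Y_n)$ itself. Instead it builds an auxiliary recursive tree process $(Z_b,b\in B)$ with each $Z_b\sim\widehat{\ml}(1-1/\alpha)$ satisfying the same recursion, invokes Corollary 17 of~\cite{AB} to establish \emph{endogeny} (so that $Z_\emptyset$ is $\cG_\infty$-measurable), and then observes that $\Cexp{Z_\emptyset}{\cG_n}=\frac{2\Gamma(2-1/\alpha)}{\Gamma(3-2/\alpha)}Y_n$; martingale convergence for the conditional expectations of an integrable $\cG_\infty$-measurable variable then forces $\frac{2\Gamma(2-1/\alpha)}{\Gamma(3-2/\alpha)}Y=Z_\emptyset$ almost surely. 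You instead establish $L^2$-boundedness directly, via the one-step subtree decomposition and the identity $2\,\E{\Delta_{0}^{2(1-1/\alpha)}}=2/3<1$; this gives uniform integrability and $\E{Y}=1$, and then you pass to the a.s.\ limit in the recursion and invoke the uniqueness half of Lemma~\ref{T:ML_characterization} (i.e.\ Durrett--Liggett). Your approach is more elementary and self-contained in that it reuses Lemma~\ref{T:ML_characterization} (already proved) and needs no endogeny machinery; the paper's approach yields the additional, and for later use conceptually attractive, almost-sure identification of the limit with the endogenous solution $Z_\emptyset$ rather than merely its distribution (though, for the statement of this proposition, the distributional identification suffices, and for Theorem~\ref{thm:dist_point_recon} the paper anyway re-derives $\cG_\infty$-measurability via a separate distributional argument). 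Both routes are valid; your $L^2$ bound hinges on the numerical fact that the contraction ratio is $2/3$ uniformly over $\alpha\in(1,2]$, a point you correctly flag.
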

\begin{proof}
Let $\cG_n = \sigma((M_b,|b| \le n))$, and let $\cG_{\infty} = \sigma((M_b, b \in B)) = \sigma(\bigcup_{n} \cG_n)$. 

Explicit calculation (as in the proof of Lemma~\ref{T:ML_characterization}) shows that $\Delta_{b0}^{1-1/\alpha}$ and $\Delta_{b1}^{1-1/\alpha}$ are both $\mathrm{U}[0,1]$-distributed, 
so 
\[
\Cexp{Y_{n+1}}{\cG_n} = 
\sum_{|b|=n}M_b^{1-1/\alpha} \cdot \Cexp{\Delta_{b0}^{1-1/\alpha}+\Delta_{b1}^{1-1/\alpha}}{\cG_n} = Y_n
\]
and so $(Y_n)$ is a $(\cG_n)$-martingale. 
Since $Y_n \ge 0$ for all $n$, it follows that $Y_n \convas Y$ for some random variable $Y$ by the martingale convergence theorem; it remains to show that $Y$ has the correct distribution. For this we use a second martingale argument together with a result of \cite{AB}.

Fix $n \in \N$ and let $(Z^{(n)}_b: |b|=n)$ be i.i.d.\ $\widehat{\ml}(1-\frac{1}{\alpha})$. Then for $b \in B$ with $|b|=m<n$, define 
$Z^{(n)}_b$ inductively by 
\[
Z^{(n)}_b = \Delta_{b0}^{1-1/\alpha} Z^{(n)}_{b0}+\Delta_{b1}^{1-1/\alpha} Z^{(n)}_{b1}\, .
\]
By Lemma~\ref{T:ML_characterization}, $Z^{(n)}_b\sim \widehat{\ml}(1-\frac{1}{\alpha})$ for all $b$ with $|b| \le n$. Furthermore, the families $(Z^{(n)}_b,|b| \le n)$ are consistent in $n$, in that 
\[
(Z^{(n)}_b,|b| \le n-1) \sim (Z^{(n-1)}_b,|b| \le n-1), 
\]
and so have a projective limit by Kolmogorov's extension theorem. Let $(Z_b,b \in B)$ be such that $(Z_b,|b| \le n) \sim (Z^{(n)}_b,|b| \le n)$ for all $n$; in particular, for all $b$ we have $Z_b \sim \widehat{\ml}(1-\frac{1}{\alpha})$, and 
\begin{equation}\label{eq:zu_recursion}
Z_b = \Delta_{b0}^{1-1/\alpha} Z_{b0}+\Delta_{b1}^{1-1/\alpha} Z_{b1}\, .
\end{equation}
The families $(Z_b, b \in B)$ and $(\Delta_b,b \in B)$ together define a {\em recursive tree process} in the sense of \cite{AB}. 
This process is easily seen to verify the conditions of Corollary 17 of \cite{AB} (briefly: $\E{\Delta_{b,1}^x}$ is decreasing in $x$ and $\p{Z_{\emptyset}=0}=0$). We conclude that the recursive tree process is {\em endogenous}, which means that for all $b \in B$, 
the random variable $Z_b$ is measurable with respect to $\sigma(\Delta_{b'}, b < b')$. In particular, $Z_{\emptyset}$ is integrable and $\cG_{\infty}$-measurable, and so the martingale convergence theorem gives that
\[
\Cexp{Z_{\emptyset}}{\cG_n} \convas Z_{\emptyset}\, 
\]
as $n \to \infty$. 

Finally, by (\ref{eq:zu_recursion}) and induction we have $Z_{\emptyset} = \sum_{|b|=n} M_b^{1-1/\alpha} Z_b$ for all $n$. 
Also, $Z_b$ is $\sigma(\Delta_{b'}, b < b')$-measurable, so if $|b|=n$ then $Z_b$ is independent of $\cG_n$. On the other hand, $M_b$ is $\cG_n$-measurable and so 
\[
\Cexp{Z_{\emptyset}}{\cG_n} = \sum_{|b|=n} M_b^{1-1/\alpha} \E{Z_b} = \frac{2\Gamma(2-\frac{1}{\alpha})}{\Gamma(3-\frac{2}{\alpha})}\cdot Y_n, 
\]
where the last equality holds as $Z_b \sim \widehat{\ml}(1-\frac{1}{\alpha})$. 
It follows that $\frac{2\Gamma(2-\frac{1}{\alpha})}{\Gamma(3-\frac{2}{\alpha})}\cdot Y=Z_{\emptyset}$ almost surely.
\end{proof}

Let
\begin{equation}\label{eq:reconst_dist}
\delta_{C}(i,j) = \frac{2 \alpha \Gamma(2 - \frac{1}{\alpha})}{\Gamma(3 - \frac{2}{\alpha})} \cdot Y(i,j).
\end{equation}
Observe that this has the same law as $d_T(U_i,U_j)$.  In Theorem~\ref{thm:dist_point_recon} below, we will show that the two quantities are, in fact, almost surely equal.

Next, for $b \in B$ write 
\[
Y_{n}^b = Y_n^b(i,j)= \sum_{|b'|=n, b \le b'} M_{b'}^{1-1/\alpha}.
\]
A practically identical proof then shows that for all $b \in B$, $Y_{n}^b \convas Y^b=Y^b(i,j)$, for random variables $(Y^b,b \in B)$ all a.s.\ satisfying $Y^b = Y^{b0}+Y^{b1}$. In this notation we have $Y(i,j) = Y^{\emptyset}(i,j)$. In particular, this allows us to define 
\begin{equation} \label{eq:reconst_dist2}
\delta_C(i,i \wedge j) = \frac{2 \alpha \Gamma(2 - \frac{1}{\alpha})}{\Gamma(3 - \frac{2}{\alpha})} \cdot Y^0(i,j), 
\quad \text{and}\quad 
\delta_C(j,i \wedge j) = \frac{2 \alpha \Gamma(2 - \frac{1}{\alpha})}{\Gamma(3 - \frac{2}{\alpha})} \cdot Y^1(i,j).  
\end{equation}
The relation between the $Y^b$ then implies that $\delta_C(i,j) = \delta_C(i,i \wedge j)+\delta_C(j,i \wedge j)$

\subsection{Recovering $(\rT,\cP,\rU)$ from $(\cC,\N,\rR)$}\label{sec:recover_all_dist}
Recall that a.s.\ for all distinct $i,j\in \N$, $\rR^{ij}$ is a routing for $i$ and $j$ by Proposition~\ref{prop:consistency}. 
For distinct $i,j \in \N$, let $\delta_{C}(i,j)$, $\delta_C(i, i \wedge j)$ and $\delta_C(j, i \wedge j)$ be defined as in (\ref{eq:reconst_dist}) and (\ref{eq:reconst_dist2}). Note that $i$ and $j$ have law $\nu$, so by Proposition~\ref{prop:znlimit}, $Y(i,j)$ is well-defined and $Y(i,j)\cdot 2 \Gamma(2 - \frac{1}{\alpha})/\Gamma(3 - \frac{2}{\alpha})$ is $\widehat{\ml}(1-1/\alpha)$-distributed.

Let $\pi(i,j)$ be the unique element of $\seg{U_i}{U_j}$ at distance $\delta_{C}(i,i \wedge j)$ from $U_i$. Note that $\pi(i,j)$ is an element of $T$, not of $C$. 
Finally, let 
\[
\tau(i,j) = \int_{\seg{\rho}{i\wedge j}} \frac{1}{\nu(C_z)} \mathrm{d}z\, ,
\]
where the integral is with respect to the length measure on $\seg{\rho}{i\wedge j}$. 
\begin{thm}\label{thm:dist_point_recon}
The following all hold almost surely.  
\begin{enumerate}
\item $d_T(U_i,p(U_i,U_j))=\delta_{C}(i,i \wedge j)$ and 
$d_T(U_j,p(U_i,U_j))=\delta_{C}(j,i \wedge j)$, and thus $d_T(U_i,U_j)=\delta_{C}(i,j)$. 
\item $(t(U_i,U_j),p(U_i,U_j)) = (\tau(i,j),\pi(i,j))$.
\end{enumerate}
\end{thm}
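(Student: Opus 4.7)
My plan is to handle part~(2) first, since it reduces to a direct change-of-variables calculation. I would parameterize $\seg{\rho}{i\wedge j}$ by arc length $r\in[0,d(\rho,i\wedge j)]$, writing $z(r)$ for the point at distance $r$ from $\rho$. Since $z(r)\in\seg{\rho}{j}$, formula~(\ref{eq:alpha_formula}) provides the bijection $s=\alpha(z(r))$ with $\d r/\d s=\mu(T(U_j,s))$, while Proposition~\ref{prop:empirical_measure} applied to the cut-tree of $T(U_j,s)$ identifies $\nu(C_{z(r)})=\mu(T(U_j,s))$ (a point image lies in $C_{z(r)}$ iff it has not yet been separated from $U_j$ at time $\alpha(z(r))$). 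The integrand in the definition of $\tau(i,j)$ therefore exactly cancels the Jacobian, and the integral telescopes to $\int_0^{t(U_i,U_j)}\d s=t(U_i,U_j)$. The identity $\pi(i,j)=p(U_i,U_j)$ then follows immediately from part~(1) and the definition of $\pi$, since $p(U_i,U_j)$ is the unique point of $\seg{U_i}{U_j}$ lying at distance $d_T(U_i,p(U_i,U_j))$ from $U_i$.

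For the distance identities in part~(1) I would iterate the routing construction of Section~\ref{sec:rout_cut}. Let $\cG_n=\sigma(M_b:|b|\le n)$. The crucial structural input is that, conditionally on $\cG_n$, Theorem~\ref{T:cut_in_3} together with Theorem~\ref{thm:re-rootinvariance} identifies the rescaled subtrees $(T^{ij}_b:|b|=n)$ as conditionally independent standard $\alpha$-stable trees, each carrying two endpoints $p^{ij}_{b0},p^{ij}_{b1}$ that are independent uniform samples from its mass measure. Writing $X_b$ for the standard-scaling distance between these endpoints (so $\alpha X_b\sim\widehat{\ml}(1-1/\alpha)$ by Theorem~\ref{T:cut_in_3b}), the restriction of equation~(\ref{eq:dij_sum}) to the descendants of $0$ in $B$ yields, for each $n$,
\[
d_T(U_i,p(U_i,U_j))=\sum_{|b|=n,\,0\preceq b} M_b^{1-1/\alpha}\,X_b,
\]
together with the analogous decomposition on the $1\preceq b$ side for $d_T(U_j,p(U_i,U_j))$.

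Given $\cG_n$, the $X_b$ are i.i.d.\ with common mean $c$ (the scalar appearing in~(\ref{eq:reconst_dist})) and finite variance $\sigma^2$ (all moments of $\widehat{\ml}(\beta)$ are finite). Consequently $\Cexp{d_T(U_i,p(U_i,U_j))}{\cG_n}=c\cdot Y^0_n(i,j)$, which converges almost surely to $c\cdot Y^0(i,j)=\delta_C(i,i\wedge j)$ by the argument of Proposition~\ref{prop:znlimit}. To promote this conditional-mean convergence to almost sure equality I would pass to $L^2$: the conditional variance
\[
\V{d_T(U_i,p(U_i,U_j))\mid \cG_n}=\sigma^2\sum_{|b|=n,\,0\preceq b} M_b^{2-2/\alpha}
\]
vanishes in expectation because marginally $\Delta_{b0}^{1-1/\alpha}\sim U[0,1]$, so $\E{\Delta_{b0}^{2-2/\alpha}}=1/3$ and induction on $n$ gives $\E{\sum_{|b|=n} M_b^{2-2/\alpha}}=(2/3)^n\to 0$. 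Combined with the $L^2$ martingale convergence theorem, this yields $d_T(U_i,p(U_i,U_j))=\delta_C(i,i\wedge j)$ almost surely; the symmetric argument gives $d_T(U_j,p(U_i,U_j))=\delta_C(j,i\wedge j)$, and summing the two completes part~(1).

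The hardest step will be justifying the iterative self-similarity assumed at the start of the decomposition: at each level of the recursion, one must argue that the boundary cut-point $p^{ij}_{b0}$ or $p^{ij}_{b1}$, which does \emph{not} itself lie in the subtree it bounds, nevertheless plays the role of a uniform sample from that subtree's standard-stable representative. This will require carefully combining Theorem~\ref{T:cut_in_3} with re-rooting invariance in the style of \cite{Mi_05,Dieuleveut}, and keeping track of how the binary routing of a single path interacts with the (possibly infinitely branching) cut events that occur at each step.
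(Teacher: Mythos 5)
Your argument for part~(2) matches the paper's: both parametrise $\seg{\rho}{i\wedge j}$ by arc length, invert the time-change from~\eqref{eq:alpha_formula}, and identify $\nu(C_z)$ with $\mu(T(U_i,\alpha(z)))$ via Proposition~\ref{prop:empirical_measure} together with the Glivenko--Cantelli theorem, after which $\pi(i,j)=p(U_i,U_j)$ follows immediately from part~(1). For part~(1) you use the same decomposition $d_T(U_i,p(U_i,U_j))=\sum_{|b|=n,\,0\le b}M_b^{1-1/\alpha}X_b$ from~\eqref{eq:dij_sum}, the same conditional-independence structure from Theorems~\ref{T:cut_in_3} and~\ref{T:cut_in_3b}, and the same identity $\Cexp{d_T(U_i,p(U_i,U_j))}{\cG_n}=\E{D_\emptyset}\,Y_n^0(i,j)$. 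Where you genuinely diverge from the paper is in how you close the gap to almost sure equality. The paper notes that $d_T(U_i,U_j)$ and $\Cexp{d_T(U_i,U_j)}{\cG_\infty}$ have the same distribution (by Theorem~\ref{T:cut_in_3b}(1) together with Proposition~\ref{prop:znlimit}) and invokes the fact that a random variable equal in distribution to a version of its own conditional expectation must be measurable with respect to the conditioning $\sigma$-field. You instead compute $\E{\V{d_T(U_i,p(U_i,U_j))\mid\cG_n}}=\sigma^2\,\E{\sum_{|b|=n,\,0\le b}M_b^{2-2/\alpha}}$ and show it decays geometrically, using that $\Delta_{b0}^{1-1/\alpha}$ is uniform on $[0,1]$ so $\E{\Delta_{b0}^{2-2/\alpha}}=1/3$; $L^2$ martingale convergence then forces $d_T(U_i,p(U_i,U_j))$ to be $\cG_\infty$-measurable directly. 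Your $L^2$ route is slightly more self-contained, requiring only that $\widehat{\ml}(1-1/\alpha)$ has finite second moment rather than the full identification of the limit law of $Y$; the paper's route is shorter given that Proposition~\ref{prop:znlimit} has already done the work. Both are correct. You are also right to flag the ``boundary cut-point plays the role of a uniform sample from the rescaled subtree'' step as the one carrying the real content: the paper handles it carefully in the proof of Proposition~\ref{prop:jointlaw} via Miermont's Lemma~9 and re-rooting invariance (Theorem~\ref{thm:re-rootinvariance}), so at the corresponding point in your argument you should cite that proposition rather than reprove the self-similarity from scratch.
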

\begin{proof}
For each $b \in B$, let 
\[
D_b = \frac{d_T(p^{ij}_{b0},p^{ij}_{b1})}{M_b^{1-1/\alpha}}\, .
\]
The reader may wish to glance at Figure~\ref{fig:routing} to refresh the definitions of the points $p^{ij}_{b0}$ and $p^{ij}_{b1}$, and their relation to $T^{ij}_b$. (When consulting that figure, it may be useful to take $b=11$, say, for concreteness. Also recall that the superscripts $ij$ are ommitted from the figure for legibility.)

For each $n \ge 0$, the trees $(T^{ij}_b,|b|=n)$ are rescaled $\alpha$-stable trees and are conditionally independent given their masses. Moreover, the random variables $(\alpha^{-1}D_b,|b|=n)$ are i.i.d.\ and are $\widehat{\ml}(1-1/\alpha)$- distributed. These observations are consequences of Theorems~\ref{T:cut_in_3} and \ref{T:cut_in_3b}. 

Now note that, by (\ref{eq:dij_sum}), we have 
\[
\sum_{|b|=n} D_b M_b^{1-1/\alpha} = \sum_{|b|=n} d_T(p^{ij}_{b0},p^{ij}_{b1}) = d_T(U_i,U_j),
\]
for every $n$, and thus we trivially have 
\[
\lim_{n\to \infty} \sum_{|b|=n} D_b M_b^{1-1/\alpha} = d_T(U_i,U_j). 
\]
On the other hand, $(D_b,|b|=n)$ is independent of $(M_b,b \in B)$, so with $\cG_n=\sigma((M_b,|b| \le n))$ as in Proposition~\ref{prop:znlimit}, we have 
\[
\Cexp{d_T(U_i,U_j)}{\cG_n} 
= \E{D_{\emptyset}} \cdot \sum_{|b|=n} M_b^{1-1/\alpha} 
= \E{D_{\emptyset}} \cdot Y_n(i,j). 
\]
Taking $n$ to infinity, it follows that 
\[
\Cexp{d_T(U_i,U_j)}{\cG_\infty} \aseq  \E{D_{\emptyset}} \cdot Y(i,j). 
\]
But $d_T(U_i,U_j) \eqdist \E{D_{\emptyset}} \cdot Y(i,j)$, which implies that, in fact, $d_T(U_i,U_j)$ is $\cG_{\infty}$-measurable (see \cite{durrett}, Exercise 5.1.12). We thus have that $d_T(U_i,U_j)\aseq \E{D_{\emptyset}} \cdot Y(i,j)$.
An essentially identical proof shows that 
$d_T(U_i,p(U_i,U_j))\aseq \E{D_{\emptyset}} \cdot Y(i,i \wedge j)$ and that 
$d_T(U_j,p(U_i,U_j))\aseq \E{D_{\emptyset}} \cdot Y(j,i \wedge j)$. Since 
\[
\E{D_{\emptyset}} = \frac{2 \alpha \Gamma(2 - \frac{1}{\alpha})}{\Gamma(3 - \frac{2}{\alpha})}\, ,
\]
this establishes the first claim of the theorem.

For the second claim, by definition we have 
$\alpha(i \wedge j) = t(U_i,U_j)$, and (\ref{eq:alpha_formula}) 
also gives 
\[
\alpha(i \wedge j) = \inf\left\{t: \int_0^t \mu(T(U_i,r))\mathrm{d}r \ge d(\rho,i \wedge j)\right\}. 
\]
It is convenient to parametrize $\left[ \! \left[ \rho , i \right[ \! \right[$ by length; to this end, for $\gamma \in [0, \ell(U_i))$, write $z(\gamma)$ for the unique point $z \in \left[ \! \left[ \rho , i \right[ \! \right[$ with $d(\rho,z) = \gamma$.  Then for all such $\gamma$ we have
\[
\int_0^{\alpha(z(\gamma))} \mu(T(U_i,r)) \mathrm{d} r = \gamma,
\]
from which it follows that
\[
\alpha(z(\gamma)) = \int_0^{\gamma} \frac{1}{\mu(T(U_i, \alpha(z(y))))} \mathrm{d} y.
\]
Recall that we also have $\alpha(i \wedge j) = t(U_i,U_j)$. 
The result will thus follow if we can show that $\mu(T(U_i,\alpha(z))) = \nu(C_z)$ for $z \in \left[ \! \left[ \rho , i \right[ \! \right[$, 
by taking $\gamma=d(\rho,i \wedge j)$ so that $z(\gamma)=i \wedge j$.  We have
\[
\{j \in \N: j \in C_z\} = \{j \in \N: U_j \in T(U_i, \alpha(z))\}.
\]
We also have
\[
\mu(T(U_i, \alpha(z))) = \lim_{n \to \infty} \frac{1}{n} \#\{j \le n: U_j \in T(U_i, \alpha(z))\}.
\]
by the Glivenko-Cantelli theorem and 
\[
\nu(C_z) = \lim_{n \to \infty} \frac{1}{n} \#\{j \le n: j \in C_z\}
\]
by Proposition~\ref{prop:empirical_measure}. This completes the proof. 
\end{proof}
\begin{cor}\label{cor:inv_meas}
The triple $(\rT,\rU,\cP)$ is measurable with respect to the triple $(\cC,\N,\rR)$. \end{cor}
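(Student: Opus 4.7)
By Theorem~\ref{thm:dist_point_recon}, the functions $\delta_C$, $\tau$, $\pi$ built from $(\cC, \N, \rR)$ almost surely satisfy $\delta_C(i,j) = d_T(U_i, U_j)$, $\tau(i,j) = t(U_i, U_j)$, and $\pi(i,j) = p(U_i, U_j)$ for all distinct $i,j \in \N$. The plan is to assemble these pointwise coincidences into a measurable reconstruction of $(\rT, \rU, \cP)$.

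First I would recover $(\rT, \rU, \mu)$. Since $\rU$ is an i.i.d.\ sequence with law $\mu$ and $\mu$ has full topological support on the $\alpha$-stable tree $T$, the sequence $\rU$ is almost surely dense in $T$. Hence the metric completion of the countable metric space $(\N, \delta_C)$ is canonically isometric to $(T, d_T)$, with each $i \in \N$ identified with $U_i$, thereby recovering both $\rT$ and $\rU$. The measure $\mu$ is then obtained as the almost sure weak limit $\mu = \lim_{n \to \infty} n^{-1} \sum_{i=1}^n \delta_{U_i}$ by the Glivenko--Cantelli theorem.

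Next I would recover $\cP$. Each pair $(\tau(i,j), \pi(i,j))$ is almost surely an element of $\cP$, giving the inclusion $\{(\tau(i,j), \pi(i,j)) : i \ne j \in \N\} \subseteq \cP$. For the reverse inclusion, fix $(p_k, t_k) \in \cP$: just before time $t_k$ the point $p_k$ lies in a unique component $T(p_k, t_k^-)$ of positive $\mu$-mass, and removing $p_k$ decomposes this component into sub-components of which at least two almost surely have strictly positive $\mu$-mass --- trivially for $\alpha = 2$ where the cut lies on the skeleton and induces a binary split, and for $\alpha \in (1,2)$ because a cut at an infinite-degree branchpoint separates the component into infinitely many subtrees whose masses sum, by mass conservation (Lemma 8(iii) of \cite{Mi_05}), to $\mu(T(p_k, t_k^-))$. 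By density and independence of $\rU$, each positive-mass sub-component almost surely contains some $U_i$; choosing $U_i, U_j$ in distinct sub-components then yields $(t(U_i, U_j), p(U_i, U_j)) = (t_k, p_k)$. Countability of $\cP$ combined with a standard Fubini argument promotes this from an a.s.-for-each statement to an a.s.-for-all statement, so $\cP = \{(\tau(i,j), \pi(i,j)) : i \ne j\}$ is recovered as a measurable function of $(\cC, \N, \rR)$.

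The main obstacle is the reverse inclusion for $\cP$: the argument must be made uniformly over the entire Poisson process, and in the $\alpha \in (1,2)$ case a little care is needed to ensure that each cut at an infinite-degree branchpoint genuinely separates two positive-mass sub-components each containing some sample. The rest of the reconstruction amounts to standard measurability properties of metric completion, weak limits of empirical measures, and first-separator operations.
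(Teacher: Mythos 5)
Your proposal is correct and follows essentially the same route as the paper: recover $(T,d_T)$ and $\rU$ as the metric completion of $(\N,\delta_C)$ via Theorem~\ref{thm:dist_point_recon}(1) and the almost-sure density of $\rU$, recover $\mu$ as the empirical measure of $\rU$, and recover $\cP$ via Theorem~\ref{thm:dist_point_recon}(2) together with the fact that every Poisson point is a.s.\ the first separator of some pair $(U_i,U_j)$. The only difference is that you flesh out the last step (that every cut a.s.\ produces at least two positive-mass sub-components, each containing some $U_i$), which the paper simply asserts.
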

\begin{proof}
First, since $\rU$ is a.s.\ dense in $\rT$, the collection of pairwise distances $(\delta_{C}(i,j),i,j \in \N) = (d_T(U_i,U_j),i,j \in \N)$ uniquely reconstructs $(T,d_T)$ up to metric space isometry, and further reconstructs the sequence $\rU$ of points of $T$. Next, since $\mu$ is the empirical measure of the collection $\rU$, this also reconstructs $\mu$ and thus reconstructs $\rT=(T,d_T,\mu)$ up to measured metric space isometry. 

Finally note that, almost surely, every point $(t,p) \in \cP$ separates some pair of points from the sequence $\rU$. In other words, every element of $\cP$ may be represented as $(t,p)=(t(U_i,U_j),p(U_i,U_j))$ for some $i,j \in \N$. 
It follows from the second statement of Theorem~\ref{thm:dist_point_recon} that, almost surely, we may reconstruct $\cP$ from $\cC$ and the routings $\rR$ as 
\[
\cP = \{(\tau(i,j),\pi(i,j)): i,j \in \N,i \ne j\}. 
\qedhere
\]
\end{proof}

The proof of Corollary~\ref{cor:inv_meas} describes a specific measurable map, which we now denote $\Phi$, with the property that $\Phi(\rC,\N,\rR) \aseq (\rT,\rU,\cP)$. The map $\Phi$ is built using $\delta_C$, the empirical measure, and the points $(\tau(i,j),\pi(i,j))$.
Denoting the laws of the triples $(\cC,\N,\rR)$ and $(\rT,\rU,\cP)$ by $\mathcal{L}$ and $\mathcal{M}$, respectively, this immediately entails the following corollary. 
\begin{cor}
Let $(\cC',\rN,\rR')$ be any random variable with law $\mathcal{L}$. Then 
$\Phi(\cC',\rN,\rR')$ has law $\mathcal{M}$. 
\end{cor}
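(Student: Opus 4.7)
The plan is to observe that this corollary is an essentially immediate consequence of the preceding one together with the basic fact that measurable functions transport distributional equalities. The key ingredient is already in hand: the proof of Corollary~\ref{cor:inv_meas} exhibits an explicit measurable map $\Phi$, defined on the space of enriched rooted measured $\R$-trees with a distinguished countable sequence of marked points and a consistent collection of routings, such that $\Phi(\cC,\N,\rR) \aseq (\rT,\rU,\cP)$. Concretely, $\Phi$ is built by first reconstructing pairwise distances in $\rT$ via the quantities $\delta_C(i,j)$ from \eqref{eq:reconst_dist}, then reconstructing $(T,d_T)$ and the sequence $\rU$ as the completion of $(\N,\delta_C)$, then recovering $\mu$ as the empirical measure of $\rU$, and finally recovering $\cP$ as the collection of pairs $(\tau(i,j),\pi(i,j))$ indexed by distinct $i,j \in \N$.

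First, I would note that $\Phi$ is measurable as a map between the natural Polish spaces in which the triples $(\cC,\N,\rR)$ and $(\rT,\rU,\cP)$ take values; this is clear from its construction, since each building block (the martingale limit $Y(i,j)$, the integral giving $\tau(i,j)$, the length-parametrization giving $\pi(i,j)$, and the passage to a metric completion from a countable dense set of pairwise distances) is measurable in its inputs. Next, since $\Phi(\cC,\N,\rR) \aseq (\rT,\rU,\cP)$, the pushforward of $\mathcal{L}$ under $\Phi$ equals $\mathcal{M}$.

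Finally, suppose $(\cC',\rN,\rR')$ has law $\mathcal{L}$. Then $\Phi(\cC',\rN,\rR')$ has law equal to the pushforward of $\mathcal{L}$ under $\Phi$, which by the previous paragraph is $\mathcal{M}$. This completes the proof. There is no serious obstacle here: the substantive content lies entirely in the construction of $\Phi$ and the verification in Theorem~\ref{thm:dist_point_recon} and Corollary~\ref{cor:inv_meas} that $\Phi$ reconstructs $(\rT,\rU,\cP)$ almost surely; once those are in place, the statement reduces to the elementary observation that measurable functions preserve equality in distribution.
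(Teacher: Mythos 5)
Your proof is correct and matches the paper's reasoning exactly: the paper states this corollary as an immediate consequence of having exhibited a measurable map $\Phi$ with $\Phi(\cC,\N,\rR) \aseq (\rT,\rU,\cP)$, which is precisely the pushforward argument you spell out.
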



\section{Questions and perspectives}
\label{sec:conc}
Though there are now several important cases in which the cut-trees and their reconstructions are well-understood, it remains to develop a fully general theory.
It would be interesting to develop a more general theory of cut-trees and their reconstructions. The following list of questions provide some concrete avenues for research along these lines. 

\begin{enumerate}
\item 
The results on empirical measures in Section~\ref{sec:measures} require compactness of the cut tree. In the case of $\alpha$-stable trees considered in this work, compactness follows from existing results in the literature. More generally, though it is likely possible to prove compactness {\em ad hoc} for specific models, it would be interesting to develop general sufficient conditions for compactness of the cut tree of an $\R$-tree. 
\item Are there cases other than those addressed by the current paper or by Broutin and Wang~\cite{BroutinWangptrees} where the cut tree has the same law as the original tree? 
\item What conditions on the law of $(\rT,\rU,\cP)$ are sufficient to guarantee that the triple may almost surely be reconstructed from $(\rC,\N,\cR)$?
\item 
For a given triple $(\rt,\ru,\cP)$ even if the cut tree $C$ is not compact, it may be that in some cases the images of the points in $\ru$ define an ``empirical measure'' on $C$. When does this occur? 
\item The distributional identities that this paper is about are ``annealed'' in that one averages over the realization of the tree, the sampled points and the cuts. It would also be interesting to study the above properties (compactness, sampled points dense etc) for a fixed tree. 
\end{enumerate}
In the case of $\alpha$-stable trees, there are also interesting unanswered questions; here are two which we find worthy of study, one quite concrete and the other rather vague.
\begin{enumerate}\setcounter{enumi}{5}
\item What is the law of the cut-tree of an $\alpha$-stable tree if the driving Poisson process is has intensity $\lambda \otimes \mathrm{d}t$, where $\lambda$ is the length measure? In other words, what happens if cuts fall uniformly on the skeleton rather than at branch points? 

\item The map $\Phi$ almost surely reconstructs $(\mathrm{T},\mathrm{U},\mathcal{P})$ from $(C,\N,\mathrm{R})$. Is $\Phi$ stable under small perturbations of $(C,\N,\mathrm{R})$? To formalize such a statement, one would need to define a more robust reconstruction map $F$, presumably extending the definition of $\Phi$. 
Having found an appropriate generalization, the question is then whether $F$ has the property that if $(C_k,\N,\mathrm{R}_k) \convas (C,\N,\mathrm{R})$ as $k \to \infty$ then $F(C_k,\N,\mathrm{R}_k) \convas F(C,\N,\mathrm{R})$. 
\end{enumerate}
A stability statement such as the second one would allow one to deduce distributional information about a random tree from information about its cut tree. The next and final section of the paper describes a concrete situation in which this would be useful: a model of discrete random trees with a complicated law, but for which a tree obtained by the discrete version of the reconstruction map has a simple and explicit description.

\subsection{A stationary tree aggregation process}

The discrete process of reconstruction described in Section \ref{subsec:general} arises in a somewhat different setting, which provides an additional motivation for its study.  (This arose in discussions of the third author with Edward Crane, Nic Freeman, James Martin, B\'alint T\'oth and Dominic Yeo.)  We describe a rooted tree-valued process which grows until it becomes infinite, and is then ``burnt'' back to the root.  This is intended to model a mean-field forest fire process (see \cite{RathToth,CraneFreemanToth}), viewed from the perspective of a particular vertex, but the precise details of this interpretation are unnecessary here.

Fix a probability distribution $\cW$ on the set of rooted trees and consider a rooted tree-valued Markov process $(T_0(t), \rho)$, which evolves as follows.  Start from a single vertex $T_0(0) = \rho$, the root.  At any subsequent time $t$, given that the current state is $T_0(t) = (T,\rho)$, at rate given by the number of vertices of $T$, sample a rooted tree $(T', r)$ from $\cW$ and an independent random vertex $v$ from $T$ and attach $r$ to $v$ by an edge, and root the resulting tree at $\rho$.  It is possible for the jump-times $J_1 < J_2 < \dots$ of this process to accumulate (i.e.\ $J_m \to J_{\infty}$ as $m \to \infty$ for some $J_{\infty} < \infty$), in which case we kill it.

As long as $\E{J_{\infty}} < \infty$, it is standard from renewal theory that one can create a stationary version of this process by the following procedure.  First generate a size-biased version $J_{\infty}^*$ of $J_{\infty}$.  Given that $J_{\infty}^* = t$, generate a path $((T^{(t)}_0(s),\rho), 0 \le s < t)$ which has the same law as $((T_0(s),\rho), 0 \le s < J_{\infty})$ conditioned on $J_{\infty} = t$.  Then finally take an independent $U[0,1]$ random variable $U$ and define $T(s) = T_0^{(t)}(Ut + s)$ for $s < (1-U)t$.  For $s \ge (1-U)t$, simply concatenate independent copies of $((T_0(s), \rho), 0 \le s < J_{\infty})$ onto the end to yield a path $((T(s), \rho), s \ge 0)$.

It turns out that there is a unique law $\cW$ on the ``environment'' of rooted trees that we aggregate onto $T(t)$ such that $\E{J_{\infty}} < \infty$ and also $T(0) \sim \cW$ (since $(T(t), t \ge 0)$ is stationary, this is also the law of $T(t)$ for any $t > 0$).  This law is awkward to describe fully, but it has the property that if $(T,\rho) \sim \cW$ then
\[
\p{|T| = k} = \frac{2}{k} \binom{2k-2}{k-1} 4^{-k}.
\]
Moreover, conditionally on $|T|=k$, if $v$ is picked uniformly from among the $k$ vertices of $T$ then $(T,v)$ has the same distribution as $(T,\rho)$ (i.e.\ $T$ is invariant under random re-rooting).  

Much easier to describe is the distribution of the \emph{genealogical tree} $G(t)$ associated with $T(t)$ via the aggregation process. 
This is an analogue of the cut-tree, where rather than thinking about edge-removal causing fragmentation we have edge-addition causing coalescence.  
For this it is useful to imagine an enriched version of the above process, in which the edges of the sampled trees are also marked with ``arrival times". The correct distribution for these marks may be deduced from the construction of $T(t)$. 

The genealogical tree $G(t)$ is a binary tree whose leaves correspond to vertices of $T(t)$ and whose internal vertices correspond to edges of $T(t)$. 
The root of $G(t)$ corresponds to the most recent edge to have appeared in $T(t)$. The two subtrees hanging off the internal vertex corresponding to an edge $e$ are the genealogical trees of the two clusters which were joined together by $e$. 

The stationarity of $T(t)$ induces stationarity for $G(t)$ and, in particular, for all $t$, $G(t)$ has the law of a critical binary Galton--Watson tree. Indeed, for a given $G$ with $k-1$ internal nodes and $k$ leaves, we have
\[
\p{G(t) = G} = \left(\frac{1}{2}\right)^{2k-1}
\]
and there are
\[
\frac{1}{k} \binom{2k-2}{k-1}
\]
such trees $G$.

How does one obtain the tree $T$ from its genealogical tree $G$?  Once again we need to mark the internal vertices of $G$ with the labels of the edges to which they correspond, after which we perform the reconstruction precisely as described in Section~\ref{subsec:general} for the cut-tree.  Moreover, because of the re-rooting invariance of a tree sampled according to $\cW$, it turns out that the two end-points of the edge marking a particular internal vertex of $G$ are uniformly distributed among the leaves of $G$ in the two subtrees of $G$ hanging off that internal vertex.  

Conditional on having $k$ leaves, $G(t)$ converges in distribution in the Gromov--Hausdorff--Prokhorov sense to a constant times the Brownian CRT, once its edge-lengths are rescaled by $k^{-1/2}$ and it is endowed with the uniform measure \cite{Kortchemski,Rizzolo}.  The law of the signposts in $G(t)$ is uniform on the relevant subtrees, which is precisely the discrete analogue of the law of the signposts in the Brownian CRT.  It is then natural to conjecture that, conditional on $|T(t)| = k$, a rescaled version of $T(t)$ also converges in distribution  to the Brownian CRT.  There are at least two proofs of this fact due to Edward Crane~\cite{Crane}; if an appropriately defined reconstruction map were known to be stable, this would provide a computation-free proof of the same result.

\section{Acknowledgements}

C.G.'s research was supported in part by EPSRC grant EP/J019496/1.  We are very grateful to Edward Crane for sharing the results of \cite{Crane} with us.

\bibliographystyle{abbrv}

\end{document}